\def\edge{\ar@{-}}
\def\dedge{\ar@{.}}
\newtheorem{theorem}{Theorem}[section]
\newtheorem{corollary}[theorem]{Corollary}
\newtheorem{lemma}[theorem]{Lemma}
\newtheorem{proposition}[theorem]{Proposition}
\theoremstyle{definition}
\newtheorem{definition}[theorem]{Definition}
\newtheorem{example}[theorem]{Example}
\newtheorem{remark}[theorem]{Remark}
\newtheorem{conjecture}[theorem]{Conjecture} 
\newtheorem{notation}[theorem]{Notation}
\def\k{K}
\def\mz{{\mathbb Z}}
\def\ad{{\rm ad}}
\def\supp{{\rm Supp}}
\def\oq{\mathcal{O}_{q}} 
\def\qhat{\widehat{q}\,}
\def\oqmmn{\oq(M(m,n))}
\def\oqmkn{\oq(M(k,n))}
\def\oqmkp{\oq(M(k,p))}
\def\oqmkk{\oq(M(k,k))}
\def\oqmnn{\oq(M(n,n))}
\def\oqgkdashn{\oq(G(k',n))}
\def\oqgkn{\oq(G(k,n))}
\def\oqgnminuskn{\oq(G(n-k,n))}
\def\oqgnminuskdash{\oq(G(n-k',n))}
\def\oqgtwofour{\oq(G(2,4))}
\title{Derivations and the first Hochschild cohomology\\ group of the quantum grassmannian\footnote{This research was partly supported by EPSRC grant EP/R009279/1.}}
\author{S Launois and T H Lenagan}
\date{}
\begin{document}
\maketitle
\begin{abstract}
We calculate the derivations and the first Hochschild cohomology group of the quantum grassmannian over a field of characteristic zero in the generic case when the deformation parameter is not a root of unity. Using graded techniques and two special homogeneous normal elements of the quantum grassmannian, we reduce the problem to computing derivations of the quantum grassmannian that act trivially on these two normal elements. We then use the dehomogenisation equality which shows that a localisation of the quantum grassmannian is equal to a skew Laurent extension of quantum matrices. This equality is used to connect derivations of the quantum grassmannian with those of quantum matrices. More precisely, again using  graded techniques, we show that derivations of the quantum grassmannian that act trivially on our two normal elements restrict to homogeneous derivations of quantum matrices. The derivations of quantum matrices are known in the square case and technical details needed to deal with the  general case are given in an appendix. This allows us to explicitly describe the first Hochschild cohomology group of the quantum grassmannian.
\end{abstract} 


\vspace{2ex}
\noindent
{\em 2020 Mathematics subject classification:} 16T20, 16P40, 16S38, 17B37, 
20G42.

\vspace{2ex}
\noindent
{\em Key words}: Quantum grassmannian, quantum matrices, dehomogenisation equality, derivation, Hochschild cohomology.


\section{Introduction} \label{section-introduction}

Let $\k$ denote a field of characteristic zero and $q\in\k$ be a nonzero element  which is not a root of unity. The quantum grassmannian $\oqgkn$ is a noncommutative algebra that is a deformation of the homogeneous coordinate ring of the classical grassmannian of $k$-planes in $n$-space. In this paper, we calculate the derivations and the first Hochschild cohomology group  of the quantum grassmannian.  

One of the motivations for this paper is \cite{ms} where a connection between the totally nonnegative grassmannian -- or rather its cell decomposition into
the union of so-called positroid cells -- and Hochschild (co)homology of $\oqgkn$ was established. More precisely, a link between volume form of positroids and Hochschild (co)homology of $\oqgkn$ is established, allowing the contribution of a positroid cell to the scattering amplitude (in the N = 4 - supersymmetric Yang-Mills theory) to be $q$-deformed. The present paper is the first one in a project to compute the Hochschild (co)homology of quantum positroid varieties. More precisely, the quantum grassmannian can be viewed as a ``quantum positroid variety'' (corresponding to the totally positive grassmannian) thanks to \cite{lln}, and we compute the first Hochschild cohomology group of the quantum grassmannian. We will come back to the general case in future work.

Assume $k\leq n$. Recall that $\oqgkn$ is the subalgebra of the quantum matrix algebra $\oqmkn$ that is generated by the $k\times k$ quantum minors of $\oqmkn$. These $k\times k$ quantum minors are the so-called quantum Pl\"ucker coordinates of $\oqgkn$. They are indexed by $k$-subsets of $\{1, \dots , m\}$ and denoted by $[I]$ for each $k$-subset $I$. The algebra $\oqgkn$ is graded with all quantum Pl\"ucker coordinates homogeneous of degree 1. 

Quantum Pl\"ucker coordinates formed on consecutive columns play a special role: they are normal and each one generates a completely prime ideal of  $\oqgkn$. They are referred to as prime quantum Pl\"ucker coordinates. Among the prime quantum Pl\"ucker coordinates, two are special: $[u]=[1,...,k]$ and $[w]=[n-k+1,...,n]$. The reason for this is that $[u]$ (respectively, $[w]$) $q$-commutes with every quantum Pl\"ucker coordinate $[I]$ with a nonnegative (respectively, nonpositive) power of $q$, that is: 
$$[u][I]=q^{\geq 0} [I][u] \mbox{ and } [w][I]=q^{\leq 0} [I][w].$$ 
We exploit this observation and other graded techniques in order to reduce the problem of computing derivations of  $\oqgkn$ to computing derivations $D$ of  $\oqgkn$ that act trivially on both $[u]$ and $[w]$; that is, derivations $D$ such that $D([u])=D([w])=0$.  

We then employ the dehomogenisation equality which shows that a localisation of the quantum grassmannian is equal to a skew Laurent extension of quantum matrices.
This equality is used to connect the set of derivations of the quantum grassmannian acting trivially on $[u]$ and $[w]$  with that of quantum matrices. More precisely, again using graded techniques, we show that derivations of the quantum grassmannian that act trivially on  $[u]$ and $[w]$ restrict to homogeneous derivations of quantum matrices. To conclude we use our knowledge of derivations of quantum matrices: the set of derivations is known in the case of square quantum matrices, see \cite{ll-qmatrix-ders}. Some technical details that are needed to deal with the fact that the non-square case has not yet been covered are given in an appendix. 

This allow us to to show that the first cohomology group of $\oqgkn$ is a $\k$-vector space of dimension $n$, with basis given by the cosets of the derivations $D_1, \dots , D_n$, where $D_i$ is the derivation of $\oqgkn$ defined by 
$$D_i([I])=\left\{ \begin{array}{ll}
[I] & \mbox{ if }i\in I;\\
0 & \mbox{ otherwise}. 
\end{array}
\right.
$$


\section{Basic definitions} \label{section-basic-definitions}

Throughout the paper, $\k$ denotes a field of characteristic zero and $q\in\k$ is a nonzero element  which is not a root of unity.


The algebra of $m\times n$ quantum matrices over $\k$, denoted by $\oqmmn$, is 
the algebra generated over $\k$ by 
$mn$ indeterminates 
$x_{ij}$, with $1 \le i \le m$ and $1 \le j \le n$,  which commute with the elements of 
$\k$ and are subject to the relations:
\[
\begin{array}{ll}  
x_{ij}x_{il}=qx_{il}x_{ij},&\mbox{ for }1\le i \le m,\mbox{ and }1\le j<l\le
n\: ;\\ 
x_{ij}x_{kj}=qx_{kj}x_{ij}, & \mbox{ for }1\le i<k \le m, \mbox{ and }
1\le j \le n \: ; \\ 
x_{ij}x_{kl}=x_{kl}x_{ij}, & \mbox{ for } 1\le k<i \le m,
\mbox{ and } 1\le j<l \le n \: ; \\
x_{ij}x_{kl}-x_{kl}x_{ij}=(q-q^{-1})x_{il}x_{kj}, & \mbox{ for } 1\le i<k \le
m, \mbox{ and } 1\le j<l \le n.
\end{array}
\]
It is well known that $\oqmmn$ is an iterated Ore extension over $\k$ with the $x_{ij}$ added 
in lexicographic order. An immediate consequence is that $\oqmmn$ is a noetherian domain. 

When $m=n$, the {\em quantum determinant} $D_q$ is defined by;
\[
D_q:= \sum\,(-q)^{l(\sigma)}x_{1\sigma(1)}\dots x_{n\sigma(n)},
\]
where the sum is over all permutations $\sigma$ of $\{1,\dots,n\}$. 

The quantum determinant is a central element in the algebra of quantum matrices $\oqmnn$. \\

If $I$ and $J$ are $t$-element subsets of $\{1,\dots,m\}$ and $\{1,\dots,n\}$, respectively, 
then the {\em quantum minor} $[I\mid J]$ is defined to be the quantum determinant of the 
$t\times t$ quantum matrix subalgebra generated by the variables $x_{ij}$ with $i\in I$ and $j\in J$. \\

 The quantum matrix algebra $\oqmmn$  is a connected $\mathbb{N}$-graded algebra with each generator $x_{ij}$ given degree one. Note that each $t \times t$ quantum minor has degree $t$.\\

\begin{definition} 
Let $k\leq n$. 
The {\em homogeneous coordinate ring of the $k\times n$ quantum grassmannian}, $\oqgkn$ (known informally as the {\em quantum grassmannian}) is the subalgebra of the quantum matrix algebra $\oqmkn$ that is generated by the $k\times k$ quantum minors of $\oqmkn$, see, for example, \cite{klr}. 
\end{definition} 

A $k\times k$ quantum minor of $\oqmkn$ must use all of the $k$ rows, and so we can specify the quantum minor by specifying the columns that define it. With this in mind, we will write 
$[J]$ for the quantum minor $[1,\dots,k\mid J]$, for any $k$-element subset $J$ of $\{1,\dots,n\}$. Quantum minors of this type are called {\em quantum Pl\"ucker coordinates}. The quantum grassmannian $\oqgkn$ is a connected $\mathbb{N}$-graded algebra with each quantum Pl\"ucker coordinate given degree one. The set of quantum Pl\"ucker  coordinates in $\oqgkn$ is denoted by $\Pi$. There is a natural partial order on $\Pi$ defined in the following way: if $I=[i_1<\dots<i_k]$ and $J=[j_1<\dots<j_k]$ then $[I]<[J]$ if and only if 
$i_l\leq j_l$ for each $l=1,\dots,k$.  A  {\em standard monomial} in the quantum Pl\"ucker coordinates is an expression of the form $[I_1][I_2]\dots[I_t]$ where $I_1\leq I_2\leq\dots\leq I_t$ in this partial order. The set of all standard monomials forms a vector space basis of $\oqgkn$ over $\k$, see, for example, \cite[Corollary 2.1]{klr}. We will refer to the process of rewriting a monomial in terms of standard monomials as {\em straightening}.\\

The quantum grassmannian $\oq(G(1,n))$ is a quantum affine space, and, as such, its 
derivations are known, see \cite[1.3.3 Corollaire]{ac}; so we will assume throughout this paper that $k>1$. As $\oq(G(n-1,n))\cong \oq(G(1,n))$, by \cite[Proposition 3.4]{ll-aut-qg}, we exclude the case $k=n-1$ as well. Thus, we will assume throughout the paper that $2\leq k\leq n-2$, and so $n\geq4$.\\

Also, it is shown in \cite[Proposition 3.4]{ll-aut-qg} that 
$\oq(G(k,n))\cong \oq(G(n-k,n))$, when $2k\leq n$. In Section~\ref{section-transferring-derivations} we need to assume that $2k\leq n$ and 
the general result is obtained from this case in Subsection~\ref{subsection-general-case} by using the isomorphism just mentioned.



\section{Derivations for $\oqgkn$ inherited from $\oqmkn$}\label{section-derivations-inherited}

We use the following notation for the delta truth function: if $P$ is a proposition then $\delta(P)=1$ if $P$ is true, while $\delta(P)=0$ if $P$ is false. As is traditional, we 
write $\delta_{ij}$ for $\delta(i=j)$\\

Recall from \cite{ll-qmatrix-ders} that for each column of $\oqmkn$ there is a derivation that is the identity on any quantum minor that contains that column, and is zero on the other quantum minors. These derivations restrict to the subalgebra $\oqgkn$ to give us $n$ derivations which we denote by $D_i$ for $i=1,\dots,n$. We refer to these derivations as the {\em column derivations} of $\oqgkn$.

\begin{lemma} \label{lemma-column-derivations} 
For each $i=1,\dots, n$, there is a derivation $D_i$ whose action on quantum Pl\"ucker coordinates is given by $D_{i}([I])=\delta(i\in I)[I]$.
\end{lemma}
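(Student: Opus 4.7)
The plan is to realise each $D_i$ as the restriction to $\oqgkn$ of a ``count-column-$i$'' derivation $\tilde D_i$ on the ambient quantum matrix algebra $\oqmkn$. Since $\oqgkn$ is generated as a subalgebra by the quantum Pl\"ucker coordinates, it suffices to produce a derivation of $\oqmkn$ that multiplies each such coordinate by a scalar; the restriction to $\oqgkn$ is then automatic from the derivation property.

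I would define $\tilde D_i$ on generators by $\tilde D_i(x_{aj}) := \delta_{ij}\,x_{aj}$ and extend by the Leibniz rule. The cleanest way to check well-definedness is to observe that $\oqmkn$ carries an $\mn^n$-grading in which $x_{aj}$ has multidegree equal to the $j$th standard basis vector $e_j$: each of the four families of defining relations is homogeneous for this grading, because the two sides of every relation involve the same columns counted with multiplicity. The map $\tilde D_i$ is then simply multiplication by the $i$th coordinate of the multidegree on each homogeneous component, and hence is a (homogeneous) derivation. Alternatively one can invoke the existence statement already recorded in \cite{ll-qmatrix-ders}. This well-definedness is the only step that requires any care, and once framed via the $\mn^n$-column grading it is free of case work, so I do not anticipate a substantive obstacle.

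Finally, I would compute $\tilde D_i$ on a Pl\"ucker coordinate $[I]=[1,\dots,k\mid I]$. Each monomial $x_{1,i_{\sigma(1)}}\cdots x_{k,i_{\sigma(k)}}$ in the expansion of the quantum determinant has multidegree $\sum_{l=1}^{k} e_{i_l}$, whose $i$th coordinate equals $\delta(i\in I)$; hence $\tilde D_i([I]) = \delta(i\in I)\,[I]$. In particular, $\tilde D_i$ preserves $\oqgkn$, and setting $D_i := \tilde D_i|_{\oqgkn}$ yields a derivation of $\oqgkn$ with the required action on every quantum Pl\"ucker coordinate.
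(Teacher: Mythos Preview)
Your proposal is correct and follows essentially the same approach as the paper: both construct the column derivation on the ambient $\oqmkn$ and restrict to $\oqgkn$. The paper simply cites \cite{ll-qmatrix-ders} for the existence of these column derivations, whereas you spell out the $\mn^n$-column-grading argument that makes this explicit; your added detail is a faithful unpacking of that citation rather than a different route.
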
 

\begin{proof} This is immediate from the definition of $D_i$ as the restriction to $\oqgkn$ of a column derivation of $\oqmkn$.
\end{proof} 

\begin{corollary}\label{corollary-sum=identity}
\[
\frac{1}{k}\left(\sum_{i=1}^n D_i\right)([I])=[I],
\]
for each quantum Pl\"ucker coordinate $[I]$ in $\oqgkn$.
\end{corollary}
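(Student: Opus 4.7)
The proof is immediate from Lemma \ref{lemma-column-derivations}. The plan is simply to apply the formula $D_i([I]) = \delta(i\in I)[I]$ termwise to the sum $\sum_{i=1}^n D_i([I])$ and observe that the coefficient that results is exactly $|I|$.

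More explicitly, I would write
\[
\left(\sum_{i=1}^n D_i\right)([I]) \;=\; \sum_{i=1}^n D_i([I]) \;=\; \sum_{i=1}^n \delta(i\in I)\,[I] \;=\; \bigl(\#\{i : i\in I\}\bigr)\,[I].
\]
Since $[I]$ is a quantum Pl\"ucker coordinate of $\oqgkn$, the indexing set $I$ is a $k$-subset of $\{1,\dots,n\}$, so $\#\{i : i\in I\} = k$. Dividing by $k$ (which is nonzero in $\k$ as $\k$ has characteristic zero) gives the claim.

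There is no real obstacle here: the only ingredients are the explicit formula from Lemma \ref{lemma-column-derivations}, the fact that quantum Pl\"ucker coordinates are indexed by $k$-subsets, and the assumption that $\mathrm{char}\,\k = 0$ which allows division by $k$. No straightening or graded arguments are required.
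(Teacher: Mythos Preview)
Your proof is correct and is essentially the same as the paper's own argument: both apply Lemma~\ref{lemma-column-derivations} termwise and observe that exactly $k$ of the $n$ summands contribute $[I]$ while the rest vanish.
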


\begin{proof}
This follows immediately from the previous lemma, as there are $k$ occurences of $D_i$ for which 
$D_i([I])=[I]$, and otherwise $D_i([I])=0$.
\end{proof}

Our main aim in this paper is to prove the following conjecture. 

\begin{conjecture} \label{conjecture}
Let $\k$ be a field of characteristic zero and let $q$ be a nonzero element of $\k$ that is not a root of unity. Then 
every derivation of $\oqgkn$ can be written as a linear combination  of  inner derivations and column derivations .  Furthermore, the column derivations are linearly independent modulo the space generated by the inner derivations.
\end{conjecture}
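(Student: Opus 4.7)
The plan is to execute the three-step reduction sketched in the introduction: modify an arbitrary derivation by column and inner derivations so that it annihilates $[u]$ and $[w]$; transfer to quantum matrices via the dehomogenisation equality; and invoke the known classification of derivations of quantum matrices. The linear independence assertion is then a separate direct graded computation.

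\textbf{Step 1: reduction to $D([u])=D([w])=0$.} Decompose an arbitrary derivation $D$ into components that are bihomogeneous with respect to both the $\mn$-grading of $\oqgkn$ and the natural $(\k^*)^n$-torus action inherited from $\oqmkn$, and fix such a bihomogeneous piece of $\mn$-degree $d$ and torus weight $\mu$. Applying $D$ to the normality relation $[u]a=\phi(a)[u]$ (where $\phi$ encodes the $q$-commutation of $[u]$), together with a torus-weight analysis, forces $D([u])$ to be a torus eigenvector with the same $q$-commutation factors with each Pl\"ucker coordinate as $[u]$ itself. A standard-monomial / weight comparison then shows that the only such homogeneous elements in degree $d+1$ are scalar multiples of $[u]^{d+1}$, so $D([u])=\alpha[u]^{d+1}$ for some $\alpha\in\k$. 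When $d=0$, subtracting $\alpha D_1$ (using $1\in u$, $1\notin w$) kills $D([u])$ without disturbing $D([w])$; a symmetric step with $D_n$ kills $D([w])$. The remaining bihomogeneous pieces with $d\ne 0$ will be handled by the quantum matrix step below, which forces them to be inner with $\alpha=0$.

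\textbf{Step 2: transfer to quantum matrices.} Once $D([u])=D([w])=0$, extend $D$ to the Ore localisation $\oqgkn[[u]^{-1}]$, which by the dehomogenisation equality is the skew Laurent extension $\oq(M(k,n-k))[y^{\pm1};\sigma]$, with $y$ corresponding to $[u]$. The vanishing $D(y)=0$ makes $D$ preserve the $y$-grading; combined with $D([w])=0$ and a further graded argument --- where the assumption $2\le k\le n-2$ and the non-square results in the appendix enter --- $D$ restricts to a homogeneous derivation of $\oq(M(k,n-k))$. The classification of derivations of quantum matrices from \cite{ll-qmatrix-ders} and the appendix then writes this restriction as an inner derivation of $\oq(M(k,n-k))$ plus a $\k$-linear combination of the $n$ canonical scalar row-and-column derivations. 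Pulling back through the dehomogenisation isomorphism, and noting that the $D_i$ of $\oqgkn$ restrict to exactly these scalar derivations, one recovers an inner derivation of $\oqgkn$ plus a $\k$-linear combination of the $D_i$.

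\textbf{Step 3: linear independence.} Suppose $\sum_{i=1}^n\alpha_iD_i=\ad_x$ for some $x\in\oqgkn$, and write $x=\sum_d x_d$ with $x_d\in(\oqgkn)_d$. Each $\ad_{x_d}$ is homogeneous of degree $d$, and since $\sum_i\alpha_iD_i$ has degree zero, comparing homogeneous parts gives $\ad_{x_d}=0$ for $d\ne 0$ and $\ad_{x_0}=\sum_i\alpha_iD_i$; as $(\oqgkn)_0=\k$, $\ad_{x_0}=0$, so $\sum_i\alpha_iD_i=0$. Evaluating on each Pl\"ucker coordinate $[I]$ gives $\sum_{i\in I}\alpha_i=0$ for every $k$-subset $I\subseteq\{1,\dots,n\}$; comparing two $k$-subsets that differ by a single element yields $\alpha_i=\alpha_j$ for all $i,j$, and $k\alpha_i=0$ then forces $\alpha_i=0$ in characteristic zero.

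\textbf{Expected main obstacle.} The deepest step is the restriction of a derivation of $\oqgkn$ satisfying $D([u])=D([w])=0$ to a homogeneous derivation of $\oq(M(k,n-k))$, and the careful matching of inner derivations and column derivations across the isomorphism $\oqgkn[[u]^{-1}]\cong\oq(M(k,n-k))[y^{\pm1};\sigma]$. Ruling out the exotic $\alpha[u]^{d+1}$ contributions for $d\ne 0$ and handling the non-square case via the appendix is where the main technical effort is expected to lie.
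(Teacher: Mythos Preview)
Your overall architecture matches the paper's, and Step~3 is essentially the paper's argument. But Steps~1 and~2 both contain real gaps.

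\textbf{Step 1.} The assertion that a bihomogeneous piece of $\mn$-degree $d$ and torus weight $\mu$ satisfies $D([u])=\alpha[u]^{d+1}$ cannot be right as stated: $D([u])$ has torus weight equal to that of $[u]$ shifted by $\mu$, whereas $[u]^{d+1}$ has $(d{+}1)$ times the weight of $[u]$, so for generic $\mu$ these are incompatible and $D([u])$ lands among other standard monomials. Concretely, in $\oqgtwofour$ the inner derivation $\ad_{[34]}$ is bihomogeneous of degree $1$ and weight $\epsilon_3+\epsilon_4$, and $\ad_{[34]}([12])=(q^{-2}-1)[12][34]$, not a multiple of $[12]^2$. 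Nor does the normality relation help: applying $D$ to $[u][I]=q^{d(I)}[I][u]$ gives
\[
D([u])[I]-q^{d(I)}[I]D([u])=q^{d(I)}D([I])[u]-[u]D([I]),
\]
and the right-hand side has no reason to vanish, so you cannot conclude that $D([u])$ $q$-commutes with each $[I]$ as $[u]$ does. The paper's Section~\ref{section-adjusting-derivatives} does \emph{not} decompose $D$ by weight; it shows (via a quantum Schubert variety argument, Lemma~\ref{lemma-no-standard-monomials}) that every standard monomial in $\supp(D([u]))$ begins with $[u]$, and then \emph{explicitly subtracts inner derivations} $\ad_z$ (Lemma~\ref{lemma-remove-terms}) to strip away all monomials other than powers of $[u]$. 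A further interplay with $D([w])$ (Lemmas~\ref{lemma-only-u1} and~\ref{lemma-no-terms-in-w}) then reduces to $D([u])=\lambda[u]$ and $D([w])=\alpha[w]$. Inner adjustments are thus essential already in Step~1; deferring the $d\neq 0$ pieces to Step~2 is circular, since the transfer to quantum matrices needs $D(y)=D([u])=0$ to make $D$ preserve the $y$-grading.

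\textbf{Step 2.} Even granting $D([u])=D([w])=0$, your claim that the inner part of the restriction pulls back to an inner derivation of $\oqgkn$ is unjustified: if $\delta|_R=\ad_z+\sum a_iD_{i*}+\sum b_jD_{*j}$ with $z\in R$, then under dehomogenisation $z$ lies in $T=\oqgkn[[u]^{-1}]$, not in $\oqgkn$. The paper avoids this entirely: homogeneity of $\delta|_R$ forces the inner contribution to vanish (since $\ad_z(x_{rs})$ has no degree-one component), leaving $\delta|_R$ as a pure $\k$-combination of the $D_{i*},D_{*j}$. One must then check, via Corollary~\ref{corollary-derivation-equalities} and a correction by a multiple of $\sum_i\widetilde{D_i}$, that the resulting combination agrees with $\delta$ on $y$ as well as on $R$---a non-trivial bookkeeping step you do not address. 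You also omit the reduction to $2k\le n$ (required for the gradings $T_i,T^{(i)}$ used in Theorem~\ref{theorem-dr-in-r}) and the passage to $2k>n$ via the isomorphism $\oqgkn\cong\oq(G(n-k,n))$.
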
 

As a consequence of the truth of this conjecture we identify the first Hochschild cohomology group of the quantum grassmannian.



\section{The dehomogenisation equality for $\oqgkn$} \label{section-dehomogenisation-equality}

In this section, we recall results concerning the dehomogenisation equality that occur in \cite[Section 3]{ll-aut-qg}. We refer the reader to that paper for detailed definitions and proofs. \\

Set $u=\{1,\dots, k\}$. Then $[u]$ commutes with all other quantum Pl\"ucker coordinates up to a power of $q$,  by \cite[Lemma 3.1]{ll-aut-qg}. 
 As $\oqgkn$ is generated by the  quantum Pl\"ucker coordinates it follows that the element $[u]$ is a normal element and so we may invert $[u]$ to obtain the overring $T:=\oqgkn[[u]^{-1}]$. \\

For $1\leq i\leq k$ and $1\leq j\leq n-k$, set

\[
x_{ij}:=[1\dots,\widehat{k+1-i},\dots k, j+k][u]^{-1}\in T.
\]
Set $p:=n-k$. The elements $x_{ij}$ generate a subalgebra of $T$ that is a quantum matrix algebra $\oqmkp$. In fact, $T$ is generated over $\oqmkp$ by $[u]^{\pm1}$, so 
\begin{eqnarray*}
T=\oqgkn[[u]^{-1}]=
\oqmkp[[u]^{\pm1}; \sigma],
\end{eqnarray*}
where $\sigma$ is the automorphism of $\oqmkp$ defined by $\sigma(x_{ij}):=qx_{ij}$ (since $[u]x_{ij}=qx_{ij}[u]$ for all $i,j$). 

When we operate on the right hand side of this equality, we will write $y$ for $[u]$. Thus, $yx_{ij}=\sigma(x_{ij})y$ and 

\begin{eqnarray}
\label{equation-dehomogenisation}
T=\oqgkn[[u]^{-1}]=
\oqmkp[y,y^{-1}; \sigma]
\end{eqnarray}

We refer to this equality as the {\em dehomogenisation equality}. We will 
make extensive use of the dehomogenisation equality to transfer derivations from $\oqmkp$ to $\oqgkn$ and vice-versa. \\



Note that \cite[Lemma 3.2]{ll-aut-qg} gives the formula for general quantum 
minors of the algebra $\oqmkp$ when viewed as elements in $\oqgkn[[u]^{-1}]$, and each quantum Pl\"ucker coordinate, multiplied by $[u]^{-1}$, occurs in the 
formula. The formula is
\[
[I|J]= [\{1,\dots, k\}\backslash (k+1-I)\sqcup (k+J)][u]^{-1}\,.
\]
In the reverse direction, for a quantum Pl\"ucker coordinate $[L]$ of $\oqgkn$, we have 
\[
[L]=[L_{\leq k}\sqcup L_{>k}] = [I\mid J][u]\,,
\]
where $L_{\leq k}:=L\cap\{1,\dots,k\}$ and $L_{>k}:= L\cap\{k+1,\dots,n\}$, while 
$I= \{(k+1)-(\{1,\dots,k\}\backslash L_{\leq k}\}$ and $J= L_{>k}-k$.




\section{Derivations arising via  dehomogenisation} \label{section-via-dhom}

The $n$ column derivations $D_i$ of $\oqgkn$ that we have defined in Section~\ref{section-derivations-inherited} satisfy $D_i([u])=[u]$, for $1\leq i\leq k$ and  $D_i([u])=0$, for $i<k\leq n$; so they extend to $n$ derivations $\widetilde{D_i}$ of $T=R[y,y^{-1};\sigma] = \oqgkn[u]^{-1}$ with  
$\widetilde{D_i}(y)=y$, for $1\leq i\leq k$, and $\widetilde{D_i}(y)=0$ , for  $i<k\leq n$.\\

 We will show that  these derivations $\widetilde{D_i}$  of $T$ 
 coincide with extensions of known derivations of $\oqmkp$ which we now recall from \cite{ll-qmatrix-ders}. \\
 
 For $1\leq i\leq k$, there are derivations $D_{i \ast }$ of $\oqmkp$ defined by 
 $D_{i \ast }(x_{rs}):=\delta_{ir}x_{rs}$, and for $1\leq j\leq p$, there are derivations $D_{\ast j}$ of $\oqmkp$ defined by 
 $D_{\ast j}(x_{rs}):=\delta_{js}x_{rs}$. 
 In other words, $D_{i \ast }$ fixes row $i$ and kills all the other rows of $\oqmkp$, while $D_{\ast j}$ fixes column $j$ and kills all other columns of $\oqmkp$. For these observations, see the comment after \cite[Corollary 2.8]{ll-qmatrix-ders}.\\
 
 Note that the Leibniz formula for derivations shows that $D_{i \ast }([I\mid J])=\delta(i\in I)[I\mid J]$ and that $D_{\ast j}([I\mid J])=\delta(j\in J)[I\mid J]$ for any quantum minor $[I\mid J]$ of $\oqmkp$. The derivations that we have just defined are not linearly independent: it is easy to check that 
 $\sum_{i=1}^k D_{i \ast }=\sum_{j=1}^p D_{\ast j}$.\\
 
 We will extend these derivations of $\oqmkp$ to derivations $\widetilde{D_{i \ast }}$ and $\widetilde{D_{\ast  j}}$ of $T =\oqmkp[y^{\pm 1};\sigma]=\oqgkn[u]^{-1}$ as follows. 
For each $i = 1,\dots, k$ set $\widetilde{D_{i \ast }}(y)=y$ and note that by applying the Leibniz formula for derivatives to $1=y y^{-1}$ we obtain $\widetilde{D_{i \ast }}(y^{-1})= -y^{-1}$.
For each $j=1,\dots, p$ set $\widetilde{D_{\ast j}}(y)=\widetilde{D_{\ast j}}(y^{-1})=0$. These choices for the action on $y$ are 
necessary to obtain Corollary~\ref{corollary-derivation-equalities}.\\

In the next proposition, we calculate the effect of the derivations $\widetilde{D_{i\ast }}$ and $\widetilde{D_{\ast j}}$ on quantum Pl\"ucker coordinates in $\oqgkn$. \\

\begin{proposition}Let $[I]$ be a quantum Pl\"ucker coordinate in $\oqgkn$ and let $1\leq i\leq k$ and $1\leq j\leq n-k$. Then, 

(i) $\widetilde{D_{i\ast}}([I]) = -\delta(k+1-i\in I)[I]
$, and 

(ii) $\widetilde{D_{\ast j}}([I]) = \delta(k+j\in I)[I]$.
\end{proposition}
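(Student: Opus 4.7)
The plan is to reduce everything to the dehomogenisation formula recalled at the end of Section~\ref{section-dehomogenisation-equality} and then apply the Leibniz rule inside $T$. To keep the notation clean (the proposition uses $I$ for the Pl\"ucker index, whereas Section~\ref{section-dehomogenisation-equality} uses $I$ for the row index set of a quantum minor of $\oqmkp$), I write the given Pl\"ucker coordinate as $[L]$ with $L\subseteq\{1,\dots,n\}$ of size $k$. The reverse formula then states that, inside $T = \oqmkp[y^{\pm 1};\sigma]$,
\[
[L] \;=\; [A\mid B]\cdot y,
\]
where $y=[u]$, $A=(k+1)-(\{1,\dots,k\}\setminus L_{\leq k})\subseteq\{1,\dots,k\}$ and $B=L_{>k}-k\subseteq\{1,\dots,p\}$. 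Since $\widetilde{D_{i\ast}}$ and $\widetilde{D_{\ast j}}$ are derivations of $T$, the Leibniz rule applied to this product reduces the calculation to the data already recorded in the paragraphs preceding the proposition: the action of $D_{i\ast}$ and $D_{\ast j}$ on the quantum minor $[A\mid B]$, and the prescribed action of $\widetilde{D_{i\ast}}$ and $\widetilde{D_{\ast j}}$ on $y$.

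For part (i), Leibniz yields
\[
\widetilde{D_{i\ast}}([L]) \;=\; D_{i\ast}([A\mid B])\,y + [A\mid B]\,\widetilde{D_{i\ast}}(y) \;=\; \delta(i\in A)\,[L] + [A\mid B]\,\widetilde{D_{i\ast}}(y),
\]
using $D_{i\ast}([A\mid B])=\delta(i\in A)[A\mid B]$. To match the right-hand side of the statement, I translate the condition $i\in A$ into a condition on $L$: since $1\leq k+1-i\leq k$, one has $i\in A$ iff $k+1-i\in\{1,\dots,k\}\setminus L_{\leq k}$ iff $k+1-i\notin L$. Thus $\delta(i\in A)=1-\delta(k+1-i\in L)$, and substituting this together with the prescribed value of $\widetilde{D_{i\ast}}(y)$ collapses the scalar coefficient of $[L]$ to $-\delta(k+1-i\in L)$.

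Part (ii) is parallel: Leibniz together with $D_{\ast j}([A\mid B])=\delta(j\in B)[A\mid B]$ and $\widetilde{D_{\ast j}}(y)=0$ gives immediately $\widetilde{D_{\ast j}}([L]) = \delta(j\in B)[L]$. The index translation is direct, since $k+1\leq k+j\leq n$ forces $j\in B$ to be equivalent to $k+j\in L_{>k}$, equivalently to $k+j\in L$, so $\delta(j\in B)=\delta(k+j\in L)$ and the formula follows.

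No step is genuinely difficult; the only thing to be vigilant about is the combinatorial bijection between the Pl\"ucker index set $L\subseteq\{1,\dots,n\}$ and the matrix index sets $A\subseteq\{1,\dots,k\}$, $B\subseteq\{1,\dots,p\}$ of the associated minor of $\oqmkp$, together with keeping careful track of the sign contributed by $\widetilde{D_{i\ast}}(y)$ in part (i) so that the scalar on $[L]$ simplifies to $-\delta(k+1-i\in L)$ rather than an affine shift of it. Once this bookkeeping is unwound, both formulas drop out of a single application of Leibniz.
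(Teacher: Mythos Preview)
Your proposal is correct and follows precisely the paper's own approach: write $[L]=[A\mid B]\,y$ via the dehomogenisation formula, apply Leibniz, use the known actions $D_{i\ast}([A\mid B])=\delta(i\in A)[A\mid B]$ and $D_{\ast j}([A\mid B])=\delta(j\in B)[A\mid B]$ together with the prescribed values on $y$, and then perform exactly the same index translations ($i\in A \Leftrightarrow k+1-i\notin L$ and $j\in B \Leftrightarrow k+j\in L$). The only caveat is in part~(i): for the scalar to collapse to $-\delta(k+1-i\in L)$ one needs $\widetilde{D_{i\ast}}(y)=-y$ (as the paper's proof itself uses, and as is forced by consistency with Corollary~\ref{corollary-derivation-equalities}), so be sure you are using that sign rather than the value $+y$ stated just before the proposition, which appears to be a typo.
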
 

\begin{proof}
Write $I=I_{\leq k}\sqcup I_{>k}$.  Then, with 
$A=(k+1)-(\{1,\dots,k\}\backslash I_{\leq k})$ and $B=I_{>k} -k$, we see that $[I]=[A\mid B]y$.

In case (i), 
\begin{eqnarray*}
\widetilde{D_{i\ast}}([I])
&=& 
\widetilde{D_{i\ast}}([A\mid B]y) \\
&=&
D_{i\ast}([A\mid B])y + [A\mid B]D_{i\ast}(y)\\
&=&
\delta(i\in A)[A\mid B]y-[A\mid B]y \\
&=&
( \delta(i\in A)-1)[I]
\end{eqnarray*}
Notice that $A=(k+1)-(\{1,\dots,k\}\backslash I_{\leq k})=(k+1)-(\{1,\dots,k\}\backslash I)$.

Hence, $i\in A$ if and only if $k+1-i\not\in I$. It follows that $\delta(i\in A)+\delta(k+1-i\in I) =1$. Hence, 
$\delta(i\in A)-1=-\delta(k+1-i\in I)$, and the result follows.

In case (ii), 
\begin{eqnarray*}
\widetilde{D_{\ast j}}([I]) &=& \widetilde{D_{\ast j}}([A\mid B]y) \\
&=&
 \widetilde{D_{\ast j}}([A\mid B])y+ [A\mid B]\widetilde{D_{\ast j}}(y)\\
 &=&
 D_{\ast j}([A\mid B]y + 0\\
 &=& 
 \delta(j\in B)[I]
\end{eqnarray*}
Note that $B=I_{>k} -k$. Hence, $j\in B=I_{>k} -k$ if and only if $k+j\in I$, and the result follows. 
\end{proof} 

\begin{corollary}\label{corollary-derivation-equalities}
(i) $\widetilde{D_{i\ast}} = -\widetilde{D_{k+1-i}}$, for $1\leq i\leq k$, and 
(ii) $\widetilde{D_{\ast j}}  = \widetilde{D_{k+j}}$, for $1\leq j\leq n-k$.~\\
\end{corollary}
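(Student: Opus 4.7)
The plan is to verify each of the two identities by checking that the derivations on both sides agree on a set of algebra generators for $T = \oqgkn[[u]^{-1}]$. Since two derivations of $T$ that coincide on a generating set must coincide everywhere, and since $T$ is generated as a $\k$-algebra by the quantum Pl\"ucker coordinates of $\oqgkn$ (which include $[u]=y$) together with $y^{-1}$, the corollary reduces to a pointwise comparison on each $[I]\in\Pi$ and on $y^{-1}$.

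For part (i), I would evaluate both $\widetilde{D_{i\ast}}$ and $-\widetilde{D_{k+1-i}}$ on an arbitrary quantum Pl\"ucker coordinate $[I]$. The preceding proposition gives $\widetilde{D_{i\ast}}([I]) = -\delta(k+1-i \in I)[I]$, while Lemma~\ref{lemma-column-derivations}, applied to the restriction $D_{k+1-i}$ of $\widetilde{D_{k+1-i}}$ to $\oqgkn$, gives $\widetilde{D_{k+1-i}}([I]) = \delta(k+1-i \in I)[I]$. These agree after the minus sign. The special case $[I] = [u] = y$ then yields agreement on $y$, and applying the Leibniz rule to $y y^{-1} = 1$ forces the two derivations to agree on $y^{-1}$ as well.

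Part (ii) is entirely analogous: the proposition gives $\widetilde{D_{\ast j}}([I]) = \delta(k+j \in I)[I]$, which matches $\widetilde{D_{k+j}}([I]) = \delta(k+j \in I)[I]$ from Lemma~\ref{lemma-column-derivations}, and agreement on $y^{-1}$ is automatic given agreement on $y=[u]$ by the same Leibniz argument. I do not anticipate any real obstacle here: the hard calculational work has already been carried out in the preceding proposition, and the corollary simply packages that pointwise identity into a statement about derivations on all of $T$.
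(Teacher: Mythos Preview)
Your proposal is correct and matches the paper's (implicit) approach: the paper states this as an immediate corollary of the preceding proposition without giving a separate proof, and your argument spells out precisely why it is immediate, by comparing both sides on the generating set $\Pi \cup \{y^{-1}\}$ of $T$ and invoking the Leibniz rule for $y^{-1}$.
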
 

\begin{example}\label{example-2x4case}
For $\oqgtwofour$, we have $\widetilde{D_{1\ast}} = -\widetilde{D_2}, ~\widetilde{D_{2\ast}} = -\widetilde{D_1}$ and $\widetilde{D_{\ast 1}} = \widetilde{D_3}, 
~\widetilde{D_{\ast 2}} = \widetilde{D_4}$.~\\
\end{example} 

\begin{remark}\label{remark-D-values}
Note, for later use, that 
\[
(\widetilde{D_1}+\dots+\widetilde{D_k}+\widetilde{D_{k+1}}+\dots +\widetilde{D_{k+p}})|_{\oqmkp}= -(D_k+\dots +D_1)+D_{k+1}+\dots +D_{k+p}=0,
\]
 while 
$(\widetilde{D_1}+\dots+\widetilde{D_k}+\widetilde{D_{k+1}}+\dots +\widetilde{D_{k+p}})(y)=ky$. 
\end{remark}




\section{Adjusting derivatives} \label{section-adjusting-derivatives}

In cohomology calculations, we can adjust our original derivation by adding or subtracting derivations that arise as inner derivations. In this section, we consider what can happen when we do this, or when adjusting by column derivations. 

Let $[u]=[1,\dots,k]$ and $[w]=[n-k+1,\dots,n]$ denote the leftmost and rightmost quantum Pl\"ucker coordinates of $\oqgkn$, respectively. For any 
quantum Pl\"ucker coordinate $[I]$, set $d(I):= \# \left(I\backslash (I\cap u)\right)$ and $e(I):= \# \left(I\backslash (I\cap w)\right)$. This notation is fixed for the rest of the paper. We will use the commutation relations for $[u]$ and $[w]$ with other quantum Pl\"ucker coordinates that are described in the following lemma without comment throughout the paper.
\begin{lemma} \label{lemma-u-w-relations}
(i) $[u][I]=q^{d(I)}[I][u]$, and \\
(ii) $[w][I]=q^{-e(I)}[I][w]$.
\end{lemma}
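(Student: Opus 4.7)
The plan is to deduce both parts from the dehomogenisation equality of Section~\ref{section-dehomogenisation-equality}, using $[u]$ for (i) and a mirror version based on $[w]$ for (ii).

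For (i), the formula at the end of Section~\ref{section-dehomogenisation-equality} gives $[I] = [A\mid B]\,y$ with $y = [u]$, where $[A\mid B]$ is a quantum minor of $\oqmkp$ of size $|B| = |I_{>k}| = d(I)$. Since $\oqmkp$ is $\mathbb{N}$-graded with each $x_{ij}$ of degree one and $\sigma(x_{ij}) = q x_{ij}$, the minor $[A\mid B]$ is homogeneous of degree $d(I)$, and therefore $\sigma([A\mid B]) = q^{d(I)}[A\mid B]$. The skew commutation $y z = \sigma(z) y$ (for $z\in\oqmkp$) then yields
\[
[u][I] \;=\; y\,[A\mid B]\,y \;=\; \sigma([A\mid B])\,y^{2} \;=\; q^{d(I)}\,[A\mid B]\,y^{2} \;=\; q^{d(I)}\,[I][u],
\]
proving (i).

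For (ii), I would set up a mirror-image dehomogenisation at $[w]$. The element $[w]$ is normal in $\oqgkn$, and by an argument parallel to Section~\ref{section-dehomogenisation-equality} (essentially reflecting the column indices via $j\mapsto n+1-j$), one can introduce generators $x'_{ij} := [n-k+1,\dots,\widehat{n+1-i},\dots,n,\,j]\,[w]^{-1}$, which generate a copy of $\oqmkp$ inside $\oqgkn[[w]^{-1}]$, together with an automorphism $\sigma'$ satisfying $\sigma'(x'_{ij}) = q^{-1} x'_{ij}$. The minus sign arises because $[w]$ $q$-commutes with the quantum Pl\"ucker coordinates using nonpositive powers of $q$ rather than nonnegative ones. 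Writing $[I] = [A'\mid B']\,y'$ in this mirror presentation with $y' = [w]$, one finds $|B'| = |I_{\leq n-k}| = e(I)$, so the identical computation as in (i) but with $q^{-1}$ in place of $q$ gives $[w][I] = q^{-e(I)}[I][w]$, as required.

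The main obstacle is carrying out the mirror dehomogenisation carefully enough to read off both $\sigma'(x'_{ij}) = q^{-1} x'_{ij}$ and the correct size $|B'| = e(I)$. The underlying computations are entirely parallel to those in Section~\ref{section-dehomogenisation-equality} (and to \cite[Lemma 3.2]{ll-aut-qg}), but the bookkeeping of which generator relations change sign must be tracked with care. A more conceptual alternative would be to transport (i) along the complement isomorphism $\oqgkn \cong \oq(G(n-k,n))$, using the identity $d(\{1,\ldots,n\}\setminus I) = e(I)$; however this still requires verifying how the isomorphism interacts with the multiplication, so the direct mirror approach is the cleanest route.
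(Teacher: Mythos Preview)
The paper's proof simply cites \cite[Lemma 3.1]{ll-aut-qg} for (i) and \cite[Lemma 1.5]{klr} for (ii), so your approach is entirely different.

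Your computation for (i) is formally correct but circular within the paper's logical structure. The dehomogenisation equality in Section~\ref{section-dehomogenisation-equality} is set up by first invoking \cite[Lemma 3.1]{ll-aut-qg}---the very statement that constitutes the paper's proof of (i)---to establish that $[u]$ is normal and that $[u]x_{ij}=qx_{ij}[u]$ (equivalently $\sigma(x_{ij})=qx_{ij}$). Your argument therefore uses special instances of (i) (those with $d(I)=1$) to derive the general case; such bootstrapping is legitimate in principle, but the special instances are not proved independently anywhere in this paper, and the formula $[L]=[I\mid J][u]$ you rely on is likewise imported from \cite[Lemma 3.2]{ll-aut-qg}.

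For (ii) the gap is more serious. No mirror dehomogenisation at $[w]$ is constructed in the paper or in the references used for Section~\ref{section-dehomogenisation-equality}, and building one would require knowing in advance that $[w]$ is normal---essentially (ii) itself. Your alternative via the isomorphism $\psi:\oqgkn\to\oq(G(n-k,n))$ of \cite[Proposition 3.1]{ll-aut-qg} does not help either: since $\psi([I])=[w_0(\widehat{I}\,)]$, one computes $\psi([u])=[1,\dots,n-k]$ and $\psi([w])=[k+1,\dots,n]$, so $\psi$ matches $[u]$ with $[u]$ and $[w]$ with $[w]$ in the two algebras, and transporting (i) merely reproduces (i), not (ii). The paper sidesteps all of this by importing (ii) directly from \cite[Lemma 1.5]{klr}.
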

\begin{proof} 
(i) is  proved in \cite[Lemma 3.1]{ll-aut-qg} and (ii) is proved in \cite[Lemma 1.5]{klr}.
\end{proof}

If $S= [I_{1}]\dots[I_{m}]$ is  a standard monomial, 
then set $d(S):= \sum d(I_i)$ and note that each $d(I_i)\geq 0$ with $d(I_i)=0$ if and only 
if $[I_i]=[u]$. Then, $S[u]=q^{d(S)}[u]S$, and so $Su=uS$ if and only if $d(S)=0$ (in which case 
$S=[u]^m$). \\

In any case, note that $[u]S$ is a standard monomial, as $[u]$ is the 
unique minimal quantum Pl\"ucker coordinate. \\

Recall from Section~\ref{section-basic-definitions} that $\oqgkn$ is a graded algebra with each quantum Pl\"ucker coordinate having degree one.

\begin{lemma} \label{lemma-degree-zero-term}
Let $D$ be a derivation of $\oqgkn$ and suppose that $D([I])=b_0+\dots+b_t$ is the  homogeneous decomposition of $D([I])$.  Then $b_0=0$.
\end{lemma}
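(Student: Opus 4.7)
The plan is to exploit the normality relation $[u][I] = q^{d(I)}[I][u]$ recorded in Lemma~\ref{lemma-u-w-relations}(i). Since $\oqgkn$ is a connected $\mathbb{N}$-graded algebra with the quantum Pl\"ucker coordinates in degree one, the degree zero component of any element is a scalar; so both $b_0$ and the analogous constant term $c_0$ of the homogeneous decomposition of $D([u]) = c_0 + c_1 + \cdots + c_s$ lie in $\mathbb{K}$ and are therefore central.

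The key step is to apply $D$ to $[u][I] = q^{d(I)}[I][u]$ using the Leibniz rule, and then to extract the degree one component of each side. On the left the degree one piece is $c_0[I] + b_0[u]$, while on the right it is $q^{d(I)}\bigl(b_0[u] + c_0[I]\bigr)$. Rearranging gives
\[
\bigl(1 - q^{d(I)}\bigr)\bigl(c_0[I] + b_0[u]\bigr) = 0.
\]
Now a case split finishes things. If $[I]\neq[u]$, then since $|I| = |u| = k$ forces $I\not\subseteq u$, we have $d(I)\geq 1$; as $q$ is not a root of unity, $1 - q^{d(I)}\neq 0$, and the linear independence of distinct quantum Pl\"ucker coordinates (they are standard monomials of degree one) yields $b_0 = 0$ (and simultaneously $c_0 = 0$). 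In the remaining case $[I] = [u]$, the above calculation applied instead to any $[J]\neq[u]$ (which exists because $\binom{n}{k}\geq 2$ under our hypothesis $2\leq k\leq n-2$) already tells us $c_0 = 0$, and since $b_0 = c_0$ when $[I] = [u]$, we are done.

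There is no serious obstacle here; the only point requiring care is the bookkeeping of the two cases above and the fact that the scalar degree zero components commute with $[u]$ and $[I]$, which is what allows the clean comparison of degree one pieces on both sides of the Leibniz expansion.
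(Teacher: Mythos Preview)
Your proof is correct and follows essentially the same approach as the paper: both apply $D$ to the relation $[u][I]=q^{d(I)}[I][u]$, compare degree one components, and use that $q$ is not a root of unity together with the linear independence of $[u]$ and $[I]$ to conclude $b_0=0$ (and the constant term of $D([u])$ vanishes as well). Your write-up is slightly more explicit about why the degree zero parts are central scalars and about handling the case $[I]=[u]$, but the argument is the same.
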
 

\begin{proof} Suppose that $[I]\neq [u]$ and suppose that 
$D([u]) = a_0+\dots+a_s$ is 
the homogeneous decomposition of $D([u])$. Let $d=d([I])$ and note that $d>0$ so that $q^d\neq 1$. By applying $D$ to the equation $[u][I] = q^d[I][u]$, 
we obtain
\[
D([u])[I]+[u]D([I])= q^dD([I])[u]+q^d[I]D([u]).
\]
Examination of the terms in degree one in this equation reveals that 
\[
a_0[I]+b_0[u]=q^db_0[u]+q^da_0[I],
\]
from which it follows that $a_0=q^da_0$ and $b_0=q^db_0$. Then $a_0=b_0=0$, as $q^d\neq 1$.
\end{proof} 

The following lemma is deduced from results in \cite{lln}. In the proof of the lemma, we use the notation developed in that paper without further explanation. 

\begin{lemma}\label{lemma-t-positive}
Let $v=\{1,\dots,k-1,k+1\}$ and let $[\alpha]>[v]$ in the standard order on quantum Pl\"ucker coordinates. Set $t:=|\alpha\backslash(v\cap\alpha)|$ and note  that $t>0$. Then $[v][\alpha]=q^t[\alpha][v]$ modulo $\left<u\right>$. 
\end{lemma}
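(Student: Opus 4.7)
The plan is to deduce the commutation relation from the quantum positroid/cluster framework for $\oqgkn$ developed in \cite{lln}. The element $v=\{1,\dots,k-1,k+1\}$ is the immediate successor of $u=\{1,\dots,k\}$ among the consecutive $k$-subsets, so $[u]$ and $[v]$ are the two smallest prime quantum Pl\"ucker coordinates. In the quotient $\oqgkn/\langle [u]\rangle$, the coset of $[v]$ inherits the role that $[u]$ plays in $\oqgkn$: it becomes the minimal normal element, and its skew-commutation constants with the images of the other quantum Pl\"ucker coordinates are controlled by the same combinatorial rule.

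First I would invoke the result from \cite{lln} that the image of $[v]$ in $\oqgkn/\langle [u]\rangle$ is normal. Exactly as in the derivation of Lemma~\ref{lemma-u-w-relations}(i), the fact that $\oqgkn$ is generated by quantum Pl\"ucker coordinates then produces, for each Pl\"ucker coordinate $[\alpha]$, a unique integer $s(\alpha)$ satisfying
\[
[v][\alpha]\equiv q^{s(\alpha)}[\alpha][v]\pmod{\langle [u]\rangle}.
\]

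Second, I would identify $s(\alpha)$ with $t$. The parallel with Lemma~\ref{lemma-u-w-relations}(i), which reads $[u][\alpha]=q^{d(\alpha)}[\alpha][u]$ with $d(\alpha)=|\alpha\setminus(u\cap\alpha)|$, is not coincidental: the straightening-law argument that yields the exponent $d(\alpha)$ from the fact that $[u]$ is the minimum Pl\"ucker coordinate applies equally to $[v]$ working modulo $\langle [u]\rangle$, with $v$ taking over the role of $u$ as the new minimum. Substituting $v$ into the combinatorial count produces exactly $t=|\alpha\setminus(v\cap\alpha)|$. The strict positivity $t>0$ is forced by the assumption $[\alpha]>[v]$: if $t=0$ then $\alpha\subseteq v$, and equal cardinalities $|\alpha|=|v|=k$ would then give $\alpha=v$, contradicting $[\alpha]>[v]$.

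The main obstacle is pure bookkeeping: extracting from the notation of \cite{lln} the precise exponent together with its sign, and verifying that the conventions align so that the constant in the stated quasi-commutation is $q^{+t}$ rather than $q^{-t}$. The combinatorial content itself is elementary once the normality of the image of $[v]$ in the quotient is in hand.
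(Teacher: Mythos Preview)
Your overall strategy coincides with the paper's: both rely on the structural results of \cite{lln} for the quotient $\oqgkn/\langle [u]\rangle$, and your intuition that $[v]$ inherits the role of $[u]$ in that quotient is correct. However, the middle step of your argument does not stand on its own. Normality of $\overline{[v]}$ only yields an automorphism $\sigma$ with $\overline{[v]}\,a=\sigma(a)\,\overline{[v]}$; there is no abstract reason $\sigma$ must send each $\overline{[\alpha]}$ to a power of $q$ times itself, so the existence of an integer $s(\alpha)$ is not yet established. Moreover, the exponent $d(\alpha)$ in Lemma~\ref{lemma-u-w-relations}(i) is not obtained from a ``straightening-law argument based on $[u]$ being minimal''; it is computed from explicit quantum minor identities in $\oqmkn$, and that computation does not transfer to $[v]$ in the quotient by formal analogy.

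The paper fills this gap precisely, via the dehomogenisation isomorphism of \cite[Theorem~9.17]{lln},
\[
\Psi:\frac{\oqgkn}{\langle u\rangle}[[v]^{-1}]\;\longrightarrow\;{\mathcal O}_{q^{-1}}(Y_\lambda)[Y^{\pm1};\sigma],\qquad \sigma(x_{ij})=qx_{ij}.
\]
That theorem also identifies $\Psi(\overline{\alpha}\,\overline{v}^{-1})$, for $\alpha\neq u,v$, with a scalar multiple of a $t\times t$ pseudo quantum minor $[I\mid J]_{q^{-1}}$ in $Y_\lambda$, where $t=|\alpha\setminus v|$. Since $Y[I\mid J]_{q^{-1}}=q^t[I\mid J]_{q^{-1}}Y$, one computes $\Psi(\overline{v}\,\overline{\alpha})=q^t\Psi(\overline{\alpha}\,\overline{v})$ and the claim follows. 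This is exactly the ``bookkeeping'' you flag in your last paragraph, but it is the substance of the proof rather than a formality: the isomorphism $\Psi$ is what simultaneously delivers normality, the $q$-scalar form of the commutation, and the exponent $t$ with the correct sign.
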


\begin{proof}
There is an isomorphism 
\[
\Psi:\frac{\oqgkn}{\left<u\right>}[[v]^{-1}]\longrightarrow{\mathcal O}_{q^{-1}}(Y_\lambda))[Y^{\pm1};\sigma]
\]
where $\sigma(x_{ij})=qx_{ij}$ for the generators $x_{ij}$ of the partition subalgebra $Y_\lambda$, see 
\cite[Theorem 9.17]{lln}. If $[I\mid J]_{q^{-1}}$ is a $t\times t$ pseudo quantum minor in $Y_\lambda$ then
$Y[I\mid J]_{q^{-1}}=q^t[I\mid J]_{q^{-1}}Y$.\\

Let $\alpha\in\Pi\backslash\{u,v\}$ and set $t:=|\alpha\backslash v|$. Then
\[
\Psi(\overline{\alpha}\,\overline{v}^{-1}) = \beta[I\mid J]_{q^{-1}}
\]
for some $\beta\in\k$, and $t\times t$ pseudo quantum minor $[I\mid J]_{q^{-1}}$, by \cite[Theorem 9.17]{lln}.

Hence, 
\begin{eqnarray*}
\Psi(\overline{v}\,\overline{\alpha})
&=&
\Psi(\overline{v})\Psi(\overline{\alpha}\,\overline{v}^{-1})\Psi(\overline{v})\\
&=&
Y\beta[I\mid J]_{q^{-1}}Y\\
&=&
q^t\beta[I\mid J]_{q^{-1}}Y^2\\
&=&
q^t\Psi(\overline{\alpha}\,\overline{v}^{-1})\Psi(\overline{v}^2)\\
&=&
q^t\Psi(\overline{\alpha}\,\overline{v})
\end{eqnarray*}
and so $\overline{v}\,\overline{\alpha}=q^t\overline{\alpha}\,\overline{v}$, as required.
\end{proof}

\begin{definition}
Suppose that $a=\sum_{i=1}^s\, a_iS_i$ is an expression for an element $a\in\oqgkn$ in terms of standard monomials $S_i$ and that each $a_i$ is nonzero. Then the {\em support} of $a$, written 
$\supp(a)$, is defined to be $\{S_i\mid i=1,\dots,s\}$. 
\end{definition}

\begin{lemma}\label{lemma-no-standard-monomials}
Let $D$ be a derivation on $\oqgkn$. Then, \\
(i) there are no standard monomials of the form $[I_1]^{a_1}[I_2]^{a_2}\dots [I_m]^{a_m}$, with $I_1\neq u$ and $\sum a_i>0$, occurring in $\supp(D[u])$;\\
(ii)  there are no standard monomials of the form $[I_1]^{a_1}[I_2]^{a_2}\dots [I_m]^{a_m}$, with $I_m\neq w$ and $\sum a_i>0$, occurring in $\supp(D[w])$.
\end{lemma}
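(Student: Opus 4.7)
The plan is to show that $D[u]$ lies in the two-sided ideal $\langle u \rangle = [u]\oqgkn$, and symmetrically $D[w] \in \langle w \rangle = [w]\oqgkn$. Since $[u]$ is the unique minimum (respectively $[w]$ the unique maximum) of the partial order on $\Pi$, every element of $[u]\oqgkn$ has its standard-monomial expansion supported on monomials whose leftmost factor is $[u]$, which is exactly what (i) asserts, and similarly (ii) is the analogous statement for $[w]$.

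For part (i), I would apply $D$ to the normality relation $[u][v] = q[v][u]$, where $v = \{1,\dots,k-1,k+1\}$ so that $d(v)=1$. Rearranging gives
\[
D[u]\cdot[v] - q\,[v]\cdot D[u] \;=\; q\,D[v]\cdot[u] - [u]\cdot D[v],
\]
whose right-hand side lies in $\langle u \rangle$ by normality of $[u]$. Reducing modulo $\langle u \rangle$ in the quotient $R := \oqgkn/\langle u \rangle$ produces the crucial identity
\[
\overline{D[u]}\cdot\overline{[v]} \;=\; q\,\overline{[v]}\cdot\overline{D[u]}.
\]

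Next, I would expand $\overline{D[u]} = \sum_S c_S\,\overline{S}$ in the standard-monomial basis of $R$; the sum runs over standard monomials $S=[I_1]^{a_1}\cdots[I_m]^{a_m}$ with $I_1 \neq u$, since those with $I_1=u$ vanish modulo $\langle u \rangle$. By Lemma~\ref{lemma-t-positive}, the element $\overline{[v]}$ satisfies $\overline{[v]}\cdot\overline{[\alpha]} = q^{|\alpha\setminus v|}\overline{[\alpha]}\cdot\overline{[v]}$ for each $\alpha > v$, and this lets me push $\overline{[v]}$ across $\overline{S}$ to obtain
\[
\overline{S}\cdot\overline{[v]} \;=\; q^{-\tau(S)}\,\overline{[v]}\cdot\overline{S}, \qquad \tau(S) := \sum_{\,j:\,I_j>v\,} a_j\,|I_j\setminus v| \;\ge\; 0.
\]
Because $v \le I_1$ for any $I_1 \neq u$, each $[v]\cdot S$ is itself a standard monomial, so the images $\overline{[v]\cdot S}$ are distinct basis elements of $R$ and therefore linearly independent. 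Comparing coefficients in
\[
\sum_S c_S\,(q^{-\tau(S)} - q)\,\overline{[v]}\cdot\overline{S} \;=\; 0
\]
forces $q^{\tau(S)+1}=1$ whenever $c_S \neq 0$, which is impossible since $q$ is not a root of unity and $\tau(S)+1 \ge 1$. Hence $\overline{D[u]}=0$, i.e.\ $D[u]\in \langle u\rangle$, and (i) follows.

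Part (ii) would proceed symmetrically, with $[w]$ replacing $[u]$ and with $w'=\{n-k,\,n-k+2,\dots,n\}$ (the unique quantum Pl\"ucker coordinate immediately below $[w]$) replacing $[v]$. Applying $D$ to $[w][w'] = q^{-1}[w'][w]$ and passing to $\oqgkn/\langle w\rangle$ yields $\overline{D[w]}\cdot\overline{[w']} = q^{-1}\,\overline{[w']}\cdot\overline{D[w]}$, after which the same coefficient-extraction argument closes the case. The main obstacle is that part (ii) requires an analogue of Lemma~\ref{lemma-t-positive} guaranteeing $\overline{[w']}\cdot\overline{[\alpha]} = q^{-|w'\setminus\alpha|}\overline{[\alpha]}\cdot\overline{[w']}$ for every $\alpha < w'$ in $\oqgkn/\langle w\rangle$; I would expect this either from a direct adaptation of the partition-subalgebra argument of \cite{lln} used in Lemma~\ref{lemma-t-positive}, or from a column-reversal antiautomorphism of $\oqgkn$ that simultaneously interchanges $[u]\leftrightarrow[w]$ and $[v]\leftrightarrow[w']$.
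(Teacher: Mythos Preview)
Your proof is correct and follows essentially the same approach as the paper: apply $D$ to $[u][v]=q[v][u]$, pass to the quotient $\oqgkn/\langle u\rangle$, use Lemma~\ref{lemma-t-positive} to commute $\overline{[v]}$ past each surviving standard monomial, and derive a contradiction from the linear independence of the images $\overline{[v]S}$ in the standard-monomial basis of the quantum Schubert variety. For (ii) the paper likewise says only that it ``follows in a similar fashion,'' so your explicit identification of the required $w'$-analogue of Lemma~\ref{lemma-t-positive} is, if anything, more careful than the original.
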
 

\begin{proof} 
(i) The proof is by contradiction, so suppose that such a standard monomial exists. Without loss of generality, we may assume that $I_1=v=\{1,\dots,k-1,k+1\}$ as we are allowing the possibility that some $a_i=0$. As $u$ and $v$ differ only in one value, we know that $[u][v]=q[v][u]$. 

Apply $D$ to the equation $[u][v]=q[v][u]$ to obtain
\[
D([u])[v]+[u]D([v]) = qD([v])[u] + q[v]D([u]).
\]
Suppose that $D([u])=\sum_i\,\alpha_iS_i\, +\sum_i\,\alpha^{'}_iS^{'}_i $ where $\alpha_i\in\k$ and the $S_i,S^{'}_i $ are standard monomials such that $[u]$ does not occur in the $S_i$, but does occur in the $S^{'}_i $. Also, suppose that $D([v])=\sum_j\,\beta_jT_j$ where $\beta_j\in\k$ and the $T_i$ are standard monomials. Then
\[
\sum_i\,\alpha_iS_i[v] + \sum_i\,\alpha^{'}_iS^{'}_i [v] + \sum_j\,\beta_j[u]T_j  = \sum_j\,q\beta_jT_j[u]     +    \sum_i\,q\alpha_i[v]S_i
+ \sum_i\,q\alpha^{'}_i[v]S^{'}_i
\]
Now, $S_i[v]=q^{-t_i}[v]S_i$  modulo $\left<u\right>$ for some $t_i\geq0$, by Lemma~\ref{lemma-t-positive} (we allow zero as it might be that $S_i$ is a power of $[v]$.). 

The image of the above equation in $\oqgkn/\left<[u]\right>$ gives 
\[
\sum_i\, q^{-t_i}\alpha_i\overline{[v]}\,\overline{S_i} = \sum_i\, q\alpha_i\overline{[v]}\,\overline{S_i}
\]
or,
\[\sum_i\, (q^{-t_i}-q)\alpha_i\overline{[v]}\,\overline{S_i} = 0.
\]
Now, $\oqgkn/\left<[u]\right>$ is the quantum Schubert variety determined by the quantum Pl\"ucker coordinate $[v]$ (see \cite{lrigal} for the definition of quantum Schubert varieties in the quantum grassmannian). As such, $\oqgkn/\left<[u]\right>$ has a basis consisting of the images of the standard monomials in $\oqgkn$ that do not involve $[u]$, see \cite[Example 2.1.3]{lrigal}. Consequently, each $(q^{-t_i}-q)\alpha_i$ in the equation above is equal to zero. As at least one $\alpha_i$ is nonzero, this gives $q^{-t_i}-q=0$, for that $i$, which is a contradiction as $q$ is not a root of unity and $t_i\geq 0$.

(ii) Follows in a similar fashion. 
\end{proof}

The next corollary follows immediately from the previous lemma. \\

\begin{corollary} \label{corollary-degree-one-part}
Let  $D$ be a derivation of $\oqgkn$.  Suppose that 
$D([u])=a_1+\dots +a_s$ is the homogeneous decomposition of $D([u])$ and that 
$D([w])=b_1+\dots+b_t$ is the  homogeneous decomposition of $D([w])$. Then $a_1$ is a scalar multiple of $[u]$ and $b_1$ is a scalar multiple of $[w]$. 
\end{corollary}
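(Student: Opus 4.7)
The plan is very short: unpack what Lemma~\ref{lemma-no-standard-monomials} says for degree-one standard monomials and observe that it forces exactly the claimed conclusion. The degree-one part of $\oqgkn$ is spanned, as a $\k$-vector space, by the quantum Pl\"ucker coordinates themselves, because each $[I]$ has degree one, any product of two or more quantum Pl\"ucker coordinates has degree at least two, and the standard monomials form a $\k$-basis of $\oqgkn$. So I can write the degree-one component $a_1$ of $D([u])$ as a finite sum
\[
a_1 \;=\; \sum_{I}\, c_I\,[I], \qquad c_I\in\k,
\]
where $I$ runs over the $k$-subsets of $\{1,\dots,n\}$, and every $[I]$ appearing here with $c_I\neq 0$ lies in $\supp(D([u]))$.

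Now each single quantum Pl\"ucker coordinate $[I]$ is itself a standard monomial of the shape $[I_1]^{a_1}\cdots[I_m]^{a_m}$ with $m=1$ and $a_1=1$ (so $\sum a_i=1>0$). Applying Lemma~\ref{lemma-no-standard-monomials}(i) directly: if $c_I\neq 0$, then $[I]\in\supp(D([u]))$, so we cannot have $I_1=I\neq u$; thus $I=u$. In other words, the only possible nonzero term in $a_1$ is a scalar multiple of $[u]$, which is exactly the first claim.

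The argument for $b_1$ is symmetric, using Lemma~\ref{lemma-no-standard-monomials}(ii) in place of (i): any degree-one quantum Pl\"ucker coordinate $[I]$ that appears in the standard-monomial expansion of $b_1$ has $I_m=I$, and the lemma forces $I=w$, so $b_1\in\k[w]$.

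There is no real obstacle; the entire content is the specialisation of Lemma~\ref{lemma-no-standard-monomials} to length-one standard monomials, which is why the authors announce the result as an immediate consequence. The only point worth highlighting in the write-up is the elementary fact that degree-one standard monomials in $\oqgkn$ are exactly the quantum Pl\"ucker coordinates, which converts the restriction on supports in the lemma into a restriction on the degree-one homogeneous components of $D([u])$ and $D([w])$.
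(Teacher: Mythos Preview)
Your proof is correct and follows exactly the approach the paper intends: it simply specialises Lemma~\ref{lemma-no-standard-monomials} to the degree-one standard monomials, which are precisely the quantum Pl\"ucker coordinates, and reads off the conclusion.
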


In order to prove Conjecture~\ref{conjecture}, we will start by showing that we can adjust an arbitrary derivation $D$ by 
adding or subtracting derivations coming from the derivations mentioned in the conjecture (column derivations and inner derivations) so that the adjusted derivation, which we will continue to denote by $D$, satisfies $D([u])=D([w])=0$.
This will enable us to transfer the study of $D$ into  a quantum matrix problem by using the dehomogenisation equality. We show 
that we can make this adjustment in a number of steps that demonstrate how to remove standard monomials that occur in $D([u])$ (and $D([w])$).\\

As a result of Lemma~\ref{lemma-no-standard-monomials}, any standard monomial that occurs in the support of $D([u])$ must start with $[u]$. 
Similarly, any standard monomial that occurs in the support of $D([w])$ must finish with $[w]$. \\

The next lemma shows that, by adjusting $D$ by suitable inner derivations, we can remove terms of $D([u])$ that are not of the form $\alpha[u]^a$ for values of 
$a\geq 1$.\\

\begin{lemma}\label{lemma-remove-terms}
Suppose that $S=[u]^a[I_1]^{b_1}[I_2]^{b_2}\dots [I_m]^{b_m}$  is a standard monomial with  $[v]\leq [I_1]$ and $\sum_i\,b_i>0$, while $a\geq 1$. 
Let $D$ be a derivation of $\oqgkn$ and suppose that $S$ occurs in the support of $D([u])$ with nonzero scalar coefficient $\alpha$. 
Set $d:=d(S)=b_1d(I_1)+\dots+b_md(I_m)$ and set 
$z:= \frac{\alpha}{q^{-d}-1}[u]^{a-1}[I_1]^{b_1}[I_2]^{b_2}\dots [I_m]^{b_m}$.  
Also, set $D':=D-\ad_z$. 
Then 
\[
\supp(D'([u])) = \supp(D([u]))\backslash\{S\}.
\]
\end{lemma}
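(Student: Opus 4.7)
The plan is a direct computation: we compute $\ad_z([u]) = z[u] - [u]z$ and show it equals $\alpha S$, so that $D'([u]) = D([u]) - \alpha S$ has support $\supp(D([u])) \setminus \{S\}$.

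First I would verify that $z$ is well-defined, i.e.\ that $q^{-d} - 1 \neq 0$. Since the standard monomial condition forces $[I_j] \geq [I_1] \geq [v] > [u]$ for every $j$, we have $d(I_j) > 0$ for each $j$, and the hypothesis $\sum_i b_i > 0$ then gives $d = \sum_j b_j d(I_j) > 0$. Because $q$ is not a root of unity, $q^{-d} \neq 1$, so the scalar $\alpha/(q^{-d}-1)$ is well-defined and $z \in \oqgkn$.

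Next I would compute the commutator. Write $S' := [I_1]^{b_1}[I_2]^{b_2}\dots[I_m]^{b_m}$ so that $z = \frac{\alpha}{q^{-d}-1}[u]^{a-1}S'$ and $S = [u]^a S'$. By Lemma~\ref{lemma-u-w-relations}(i), $[u][I_j] = q^{d(I_j)}[I_j][u]$, hence $[I_j][u] = q^{-d(I_j)}[u][I_j]$, and iterating through $S'$ gives $S'[u] = q^{-d}[u]S'$. Therefore
\begin{align*}
\ad_z([u]) &= z[u] - [u]z \\
&= \tfrac{\alpha}{q^{-d}-1}\bigl([u]^{a-1}S'[u] - [u]^a S'\bigr) \\
&= \tfrac{\alpha}{q^{-d}-1}\bigl(q^{-d}[u]^a S' - [u]^a S'\bigr) \\
&= \alpha [u]^a S' = \alpha S.
\end{align*}
Consequently $D'([u]) = D([u]) - \ad_z([u]) = D([u]) - \alpha S$. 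Since $S$ occurred in $\supp(D([u]))$ with coefficient exactly $\alpha$, and $\alpha S$ is itself a (scalar multiple of a) standard monomial, subtracting it simply removes $S$ from the support and alters no other term. This yields the desired equality $\supp(D'([u])) = \supp(D([u])) \setminus \{S\}$.

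There is essentially no obstacle: the proof is a one-line commutation calculation once one recognizes that $[u]$ $q$-commutes with $S'$ with an explicit scalar depending only on $d(S)$. The only point to be careful about is the non-vanishing of $q^{-d} - 1$, which uses both that $q$ is not a root of unity and that the hypothesis $\sum b_i > 0$ together with $[I_1] \geq [v] > [u]$ forces $d > 0$.
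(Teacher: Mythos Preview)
Your proof is correct and follows essentially the same route as the paper: a direct computation of $\ad_z([u])$ using the $q$-commutation of $[u]$ with each $[I_j]$ to obtain $\ad_z([u]) = \alpha S$, after first noting that $d>0$ so that $q^{-d}-1\neq 0$. Your justification of $d>0$ is in fact slightly more explicit than the paper's.
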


\begin{proof}
Note that $d>0$, so $q^{-d}-1\neq 0$. 
We calculate 
\begin{eqnarray*} 
ad_z([u]) &=& zu-uz\\
&=&
\frac{\alpha}{q^{-d}-1}\left([u]^{a-1}[I_1]^{b_1}[I_2]^{b_2}\dots [I_m]^{b_m}[u] -
[u]^a[I_1]^{b_1}[I_2]^{b_2}\dots [I_m]^{b_m}\right)\\
&=&
\frac{\alpha}{q^{-d}-1}\left(q^{-d}[u]^a[I_1]^{b_1}[I_2]^{b_2}\dots [I_m]^{b_m} -[u]^a[I_1]^{b_1}[I_2]^{b_2}\dots [I_m]^{b_m}\right)\\
&=&
\frac{\alpha}{q^{-d}-1}\left((q^d-1)[u]^a[I_1]^{b_1}[I_2]^{b_2}\dots [I_m]^{b_m}\right)\\
&=&
\alpha S
\end{eqnarray*}
It follows that $\supp(D'([u])) = \supp(D([u]))\backslash\{S\}$, as required.
\end{proof}

\begin{corollary}\label{corollary-homogeneous-terms}
Let $D$ be a derivation of $\oqgkn$. Then there is a derivation $D'$ such that $D'-D$ is a sum of inner derivations and such that the homogeneous terms of $D'([u])$ are of the form $\lambda_a[u]^a$ for $a\geq 1$ and $\alpha_a\in\k$. 
\end{corollary}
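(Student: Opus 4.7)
The plan is to iterate Lemma~\ref{lemma-remove-terms} until every standard monomial in $\supp(D([u]))$ has been reduced to a pure power of $[u]$. First, I expand $D([u])$ in the standard monomial basis of $\oqgkn$, obtaining a finite sum $\sum_S \alpha_S\, S$ with each $\alpha_S\in\k^\ast$. By Lemma~\ref{lemma-degree-zero-term} the degree-zero component of $D([u])$ vanishes, so every $S$ in the support has positive degree. Then Lemma~\ref{lemma-no-standard-monomials}(i) forces every such $S$ to begin with $[u]$; equivalently, $S=[u]^a[I_1]^{b_1}\cdots[I_m]^{b_m}$ with $a\geq 1$ and $[v]\leq[I_1]<\cdots<[I_m]$, where the special case $\sum b_i=0$ is the pure power $[u]^a$.

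Next, as long as some $S\in\supp(D([u]))$ has $\sum b_i>0$, I invoke Lemma~\ref{lemma-remove-terms} with the associated element $z$. The computation carried out there shows that $\ad_z([u])=\alpha_S\, S$, so that the modified derivation $D-\ad_z$ satisfies
\[
\supp\bigl((D-\ad_z)([u])\bigr)=\supp(D([u]))\setminus\{S\}.
\]
No new monomials are introduced, and in particular any pure-power term $[u]^c$ already present in $D([u])$ is untouched. Since $\supp(D([u]))$ is a finite set, repeating this step — at each stage working with the current derivation and a fresh inner derivation $\ad_{z_i}$ — terminates after finitely many iterations, producing a derivation $D':=D-\sum_i\ad_{z_i}$ whose value on $[u]$ has support contained in $\{[u]^a : a\geq 1\}$.

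Since each $[u]^a$ is the unique standard monomial of its degree in this set, the homogeneous decomposition of $D'([u])$ takes the required form $\sum_{a\geq 1}\lambda_a\,[u]^a$, and by construction $D'-D$ is a sum of inner derivations. The argument is essentially bookkeeping on top of the earlier lemmas; the only point that needs to be checked explicitly is that successive applications of Lemma~\ref{lemma-remove-terms} do not re-introduce previously cancelled monomials, which is immediate from the formula $\ad_z([u])=\alpha_S\,S$.
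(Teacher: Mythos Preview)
Your proof is correct and follows essentially the same approach as the paper: invoke Lemma~\ref{lemma-no-standard-monomials} to ensure every standard monomial in $\supp(D([u]))$ begins with $[u]$, then repeatedly apply Lemma~\ref{lemma-remove-terms} to strip off the non-pure-power terms. Your version is simply more explicit about the bookkeeping (finiteness of the support, non-reintroduction of cancelled monomials) than the paper's brief argument.
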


\begin{proof} Lemma~\ref{lemma-no-standard-monomials} shows that $\supp(D([u]))$ has no terms whose standard monomials do not begin with $[u]$. By using Lemma~\ref{lemma-remove-terms} an appropriate number of times, we can remove  terms in $\supp(D([u]))$ that involve $[u]$ and at least one other quantum Pl\"ucker coordinate by adjusting by suitable inner derivations. What remains is a derivation $D'$ whose support only involves terms of the form $[u]^a$. 
\end{proof}

Our next task is to show that we can adjust further, if necessary, to see that we can reduce to $a=1$ being the only possibility.\\

\begin{lemma}Suppose that $D$ is a derivation of $\oqgkn$ such that $[u]^{a-1}[w]$ occurs in $\supp(D([w]))$ with $a>1$. Then there is a derivation $D'$ of $\oqgkn$ 
such that $D'-D$ is an inner derivation 
and  $D'([u])=D([u])$  while  $\supp(D'([w]))=\supp(D([w]))\backslash [u]^{a-1}[w]$. 
\end{lemma}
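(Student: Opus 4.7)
The plan is to mirror the strategy of Lemma~\ref{lemma-remove-terms}, but with the additional constraint that the element $z$ used to form the adjusting inner derivation must commute with $[u]$, so that $D'([u]) = D([u])$ is preserved. The most natural candidates with this property are scalar multiples of powers of $[u]$, and a degree count (since $[w]$ has degree $1$ and $[u]^{a-1}[w]$ has degree $a$) suggests the choice $z := \beta [u]^{a-1}$ for a scalar $\beta \in \k$ to be determined. Then $\ad_z([u]) = \beta([u]^{a-1}[u] - [u][u]^{a-1}) = 0$ automatically.

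Next, I would compute $\ad_z([w])$ using Lemma~\ref{lemma-u-w-relations}(ii). Setting $e := e(u) = |u \setminus (u \cap w)|$ and iterating $[w][u] = q^{-e}[u][w]$ gives $[w][u]^{a-1} = q^{-(a-1)e}[u]^{a-1}[w]$, so
\[
\ad_z([w]) = \beta[u]^{a-1}[w] - \beta[w][u]^{a-1} = \beta\bigl(1 - q^{-(a-1)e}\bigr)[u]^{a-1}[w].
\]
Since this touches the single standard monomial $[u]^{a-1}[w]$ and nothing else, if $\gamma \in \k$ denotes the coefficient of $[u]^{a-1}[w]$ in the standard-monomial expansion of $D([w])$, then setting $\beta := \gamma/\bigl(1 - q^{-(a-1)e}\bigr)$ and $D' := D - \ad_z$ yields $\supp(D'([w])) = \supp(D([w])) \setminus \{[u]^{a-1}[w]\}$ while $D'([u]) = D([u])$ and $D'-D = -\ad_z$ is inner.

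The only point requiring care is ensuring that $\beta$ is well defined, i.e.\ that $1 - q^{-(a-1)e} \neq 0$. Since $u = \{1,\dots,k\}$ and $w = \{n-k+1,\dots,n\}$ with the standing assumption $2 \le k \le n-2$, we have $u \neq w$, hence $u \not\subseteq w$ and so $e \ge 1$; combined with $a > 1$ this gives $(a-1)e \ge 1$, and because $q$ is not a root of unity the scalar is nonzero. This positivity check, resting on the hypothesis $k \le n-2$, is the only place where something beyond a routine commutation computation is needed; everything else is formal, and the lemma follows.
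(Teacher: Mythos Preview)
Your proof is correct and follows essentially the same approach as the paper: adjust by $\ad_{\beta[u]^{a-1}}$ for a suitable scalar $\beta$, noting that this commutes with $[u]$ and produces exactly the single standard monomial $[u]^{a-1}[w]$ when applied to $[w]$. The paper phrases the commutation via $d(w)$ rather than your $e:=e(u)$, but these are equal since $|u|=|w|=k$ forces $|u\setminus(u\cap w)|=|w\setminus(w\cap u)|$; otherwise the arguments are identical.
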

\begin{proof} 
Suppose that $[u]^{a-1}[w]$ occurs in $\supp(D([w])$, say with nonzero coefficient $\beta$. Now, 
$\ad_{[u]^{a-1}}([w])=[u]^{a-1} [w]-[w] [u]^{a-1} =(1-q^{-d(w)(a-1)})[u]^{a-1} [w]$, and note that $1-q^{-d(w)(a-1)}\neq 0$, as both $d(w)$ and $a-1$ are nonzero and $q$ is not a root of unity. 
Set $D':= D-\beta(1-q^{-d(w)(a-1)})^{-1}\ad_{[u]^{a-1}}$. Then $D'([u])=D([u])$, as $\ad_{[u]^{a-1}}([u])=0$. Also, 
$D'([w])= D([w])-\beta(1-q^{-d(w)(a-1)})^{-1}\ad_{[u]^{a-1}}([w])=D([w])-\beta[u]^{a-1}[w]$, so that 
$\supp(D'([w]))=\supp(D([w]))\backslash [u]^{a-1}[w]$, as required. 
\end{proof}

The following corollary now follows by applying the previous lemma an appropriate number of times.

\begin{corollary}\label{corollary-no-terms-in-dw}
Let $D$ be a derivation of $\oqgkn$. Then there is a derivation $D'$ of $\oqgkn$ such that $D' - D$ is a sum of inner derivations, $D'([u])=D([u])$ and such that there are no terms of the form $[u]^{a-1}[w]$ with $a>1$ occuring in $\supp(D([w]))$. 
\end{corollary}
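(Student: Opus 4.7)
The plan is a straightforward finite iteration built on the preceding lemma. First I would observe that $D([w])$ is a single element of $\oqgkn$ and hence has finite support when written in the standard monomial basis; in particular, only finitely many standard monomials of the form $[u]^{a-1}[w]$ with $a>1$ can appear in $\supp(D([w]))$. List these and their nonzero scalar coefficients, say as $[u]^{a_j-1}[w]$ with coefficient $\beta_j$ for $j=1,\dots,r$.

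Next I would remove these offending terms one at a time using the previous lemma. For each $j$, the lemma provides an explicit scalar $\gamma_j:=\beta_j(1-q^{-d(w)(a_j-1)})^{-1}$, which is well-defined because $d(w)>0$, $a_j-1\geq 1$, and $q$ is not a root of unity, so $1-q^{-d(w)(a_j-1)}\neq 0$. Subtracting $\gamma_j\ad_{[u]^{a_j-1}}$ from the current derivation kills the term $[u]^{a_j-1}[w]$ in the image of $[w]$ while leaving the image of $[u]$ untouched, since $\ad_{[u]^{a_j-1}}([u])=0$.

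The only point requiring actual verification is noninterference: eliminating one offending term must not reintroduce or disturb another. This is built into the preceding lemma, whose computation gives $\ad_{[u]^{a-1}}([w])=(1-q^{-d(w)(a-1)})[u]^{a-1}[w]$ as a scalar multiple of the single monomial $[u]^{a-1}[w]$, with no other terms in the support changed. Hence the adjustments corresponding to distinct exponents $a_j$ are completely independent, and may be carried out in any order or accumulated into a single compound correction.

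Setting $D':=D-\sum_{j=1}^{r}\gamma_j\ad_{[u]^{a_j-1}}$ then produces the desired derivation: the difference $D'-D$ is manifestly a finite sum of inner derivations, $D'([u])=D([u])$ because each $\ad_{[u]^{a-1}}$ annihilates $[u]$, and $\supp(D'([w]))$ contains no standard monomial of the form $[u]^{a-1}[w]$ with $a>1$. No substantive obstacle arises beyond bookkeeping; the corollary is essentially just the observation that the lemma's adjustments commute in the sense that they target disjoint monomials, so a finite number of applications suffice.
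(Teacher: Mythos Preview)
Your proposal is correct and follows essentially the same approach as the paper, which simply says the corollary follows by applying the previous lemma an appropriate number of times. Your write-up is more detailed---you spell out the finiteness of the support, the explicit scalars, and the non-interference between adjustments---but the underlying argument is identical.
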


\begin{lemma}\label{lemma-only-u1}
Let $D$ be a derivation of $\oqgkn$ such that $D([u])=\sum \lambda_i[u]^i$ for some $\lambda_i\in\k$ and suppose that no terms of the form $[u]^{a-1}[w]$ with $a>1$ occur in $\supp(D([w]))$. Then $D([u])=\lambda_1[u]$.
\end{lemma}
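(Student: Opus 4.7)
The plan is to apply $D$ to the commutation relation $[u][w]=q^d[w][u]$ from Lemma~\ref{lemma-u-w-relations}, where $d:=d(w)$; note $d>0$ since $w\neq u$ (because $k\leq n-2$). This yields
\[
D([u])[w]+[u]D([w]) \;=\; q^dD([w])[u]+q^d[w]D([u]).
\]
Substituting $D([u])=\sum_{i\geq 1}\lambda_i[u]^i$ (the constant term vanishes by Lemma~\ref{lemma-degree-zero-term}) and collecting all $D([u])$-contributions on the left gives
\[
\sum_{i\geq 1}\lambda_i\bigl(q^{id}-q^d\bigr)[w][u]^i \;=\; q^dD([w])[u]-[u]D([w]).
\]

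I would then straighten both sides in the standard monomial basis. On the left, $[w][u]^i=q^{-id}[u]^i[w]$, so the left-hand side equals $\sum_{i\geq 2}\lambda_i\bigl(1-q^{-d(i-1)}\bigr)[u]^i[w]$ (the $i=1$ term vanishes identically). On the right, write $D([w])=\sum_j\mu_jS_j$ as a sum of standard monomials. By Lemma~\ref{lemma-no-standard-monomials}(ii) each $S_j$ ends in $[w]$, so $S_j=T_j[w]$ for some standard monomial $T_j$. Using $S_j[u]=q^{-d(S_j)}[u]S_j$, the right-hand side rewrites as $\sum_j\mu_j\bigl(q^{d-d(S_j)}-1\bigr)[u]S_j$, and each $[u]S_j=[u]T_j[w]$ is already a standard monomial because $[u]$ is the minimum Pl\"ucker coordinate and $[w]$ the maximum.

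The key step is then to compare coefficients of $[u]^i[w]$ for $i\geq 2$ on both sides. A standard monomial $[u]T_j[w]$ can equal $[u]^i[w]$ only if $T_j=[u]^{i-1}$, i.e.\ $S_j=[u]^{i-1}[w]$ with $i-1\geq 1$; but by the standing hypothesis no such $S_j$ lies in $\supp(D([w]))$. Hence no term of the form $[u]^i[w]$ with $i\geq 2$ appears in the standard expansion of the right-hand side. Since standard monomials form a $\k$-basis of $\oqgkn$, this forces $\lambda_i\bigl(1-q^{-d(i-1)}\bigr)=0$ for every $i\geq 2$. As $q$ is not a root of unity and $d(i-1)>0$, we conclude $\lambda_i=0$ for $i\geq 2$, so $D([u])=\lambda_1[u]$.

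The main technical care lies in the straightening step: one must verify that $[u]S_j=[u]T_j[w]$ is genuinely already standard, so that no further rewriting produces spurious $[u]^i[w]$ contributions, and must correctly track the scalar $q^{d-d(S_j)}-1$ so that the basis argument applies cleanly. Once this is checked, the conclusion is immediate from linear independence of standard monomials.
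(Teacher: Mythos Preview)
Your proof is correct and follows essentially the same approach as the paper: apply $D$ to the commutation relation $[u][w]=q^{d(w)}[w][u]$, rewrite everything in the standard monomial basis using that $[u]S_j$ is already standard, and compare coefficients of $[u]^i[w]$ for $i\geq 2$ to force $\lambda_i=0$. The only cosmetic difference is that you first collect the $D([u])$-contributions on one side before straightening, whereas the paper works directly with the four-term equation; the underlying argument is the same.
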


\begin{proof} 
Suppose that $D([w])=\sum\beta_iS_i$ for some standard monomials $S_i$ and $0\neq \beta_i\in\k$ and note that there is no $S_i$ such that 
$S_i=[u]^{a-1}[w]$ for any $a>1$, by assumption. Apply $D$ to the equation $[u][w]=q^{d(w)}[w][u]$ to obtain 
\[
D([u])[w]+[u]D([w])=q^{d(w)}D([w])[u]+q^{d(w)}[w]D([u]).
\]
Hence,
\begin{align*}
\sum \lambda_i[u]^i[w] + \sum\beta_i[u]S_i &= \sum q^{d(w)}\beta_iS_i[u] + \sum q^{d(w)}\lambda_i[w][u]^i\end{align*}
and so 
\begin{align*}
\sum \lambda_i[u]^i[w] + \sum\beta_i[u]S_i &=\sum q^{d(w)-d(S_i)}\beta_i[u]S_i + \sum q^{d(w)-i\cdot d(w)}\lambda_i[u]^i[w] .
\end{align*}
The terms in this equation are all scalar multiples of standard monomials. Consider the occurences of the standard monomial $[u]^a[w]$ for a given $a>1$. 
If $[u]S_i=[u]^{a}[w]$ then $S_i=[u]^{a-1}[w]$, which does not occur, by assumption. Hence, 
the second term on the left side of this equation and the first term on the right side do not contain $[u]^{a}[w]$. It follows that $\lambda_a[u]^a[w]=q^{(1-a)d(w)}\lambda_a[u]^a[w] $, and this forces 
$\lambda_a(1-q^{(1-a)d(w)}) =0$. Now, $(1-a)d(w)\neq 0$, as $a\neq 1$ so $(1-q^{(1-a)d(w)})\neq 0$. Hence, $\lambda_a=0$. As this is true for all $a>1$, we obtain the required result. 
\end{proof} 

Recall from Lemma~\ref{lemma-column-derivations} that $D_1$ is the column derivation defined by $D_{1}([I])=\delta(1\in I)[I]$ for each quantum Pl\"ucker coordinate $[I]$.

\begin{corollary} Let $D$ be a derivation of $\oqgkn$ such that $D([u])=\lambda[u]$ for some $\lambda\in\k$ and suppose that no terms of the form $[u]^{a-1}[w]$ with $a>1$ occur in $\supp(D([w]))$. Then there is a derivation $D'$ of $D$ such that the following hold:\\
(i) $D'([u])=0$;\\
(ii) $D'-D=\lambda D_1$;\\
(iii) there are no terms of the form $[u]^{a-1}[w]$ with $a>1$ that occur in $\supp(D'([w]))$.
\end{corollary}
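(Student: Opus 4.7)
The natural choice is to set $D' := D - \lambda D_1$, where $D_1$ is the column derivation from Lemma~\ref{lemma-column-derivations}. The proof is then a short verification of the three properties, with no genuine obstacle; the content amounts to observing that $1\in u$ while $1\notin w$.

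For (i), recall that $D_1([I])=\delta(1\in I)[I]$. Since $1\in u=\{1,\dots,k\}$, we obtain $D_1([u])=[u]$. Hence
\[
D'([u]) = D([u]) - \lambda D_1([u]) = \lambda[u] - \lambda[u] = 0,
\]
as required. Statement (ii) is simply the definition of $D'$ (modulo a sign convention in the stated formula: $D-D'=\lambda D_1$).

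For (iii), the key point is that $1\notin w=\{n-k+1,\dots,n\}$. Indeed, the standing assumption $2\leq k\leq n-2$ established at the end of Section~\ref{section-basic-definitions} gives $n-k+1\geq 3>1$, so by the defining formula for $D_1$ we have $D_1([w])=0$. Therefore
\[
D'([w]) = D([w]) - \lambda D_1([w]) = D([w]),
\]
so $\supp(D'([w]))=\supp(D([w]))$, and the hypothesis on $D$ transfers verbatim to $D'$.

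There is essentially no hard step: the only things to check are the two membership facts $1\in u$ and $1\notin w$, both of which are immediate from the definitions of $u$ and $w$ together with the standing hypothesis $k\leq n-2$. The corollary is thus a clean packaging of the fact that the column derivation $D_1$ precisely cancels the scalar multiple of $[u]$ left over from the previous lemma, without perturbing $D([w])$.
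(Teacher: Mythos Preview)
Your proof is correct and essentially identical to the paper's own proof: both set $D':=D-\lambda D_1$, verify $D_1([u])=[u]$ (since $1\in u$) and $D_1([w])=0$ (since $1<n-k+1$), and read off the three conclusions. Your observation about the sign in item (ii) is also accurate; the paper's proof makes the same choice $D'=D-\lambda D_1$, so the stated equality $D'-D=\lambda D_1$ is indeed a minor slip and should read $D-D'=\lambda D_1$.
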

\begin{proof} Note that $D_1([u]) = D_1([1,\dots,k])=[u]$ and $D_1([w])=D_1([n-k+1,\dots,n])=0$, as $1<n-k+1$.
Set $D'=D-\lambda D_1$ so that $D'([u])=D([u])-\lambda D_1([u]) =\lambda[u]-\lambda[u]=0$. Also, $D'([w])=D([w])-\lambda D_1([w]) =
D([w])$; so no terms of the form $[u]^{a-1}[w]$ with $a>1$ occur in $\supp(D([w]))$.
\end{proof}

\begin{lemma}\label{lemma-no-terms-in-w}
Let $D$ be a derivation of $\oqgkn$ with the following properties:\\
(i) $D([u])=0$,\\
(ii) $\supp(D([w]))$ contains no term of the form $[u]^{a-1}[w]$ for $a>1$.\\
Then $D([w])=\alpha[w]$ for some $\alpha\in\k$. 
\end{lemma}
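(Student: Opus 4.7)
The plan is to apply $D$ to the commutation relation $[u][w] = q^{d(w)}[w][u]$ from Lemma~\ref{lemma-u-w-relations} and use $D([u]) = 0$, together with linear independence in the standard monomial basis, to force the shape of $D([w])$. Write $D([w]) = \sum_i \beta_i S_i$ as a linear combination of distinct standard monomials with $\beta_i \neq 0$. By Lemma~\ref{lemma-no-standard-monomials}(ii), each $S_i$ ends in a positive power of $[w]$, so $S_i = T_i [w]^{c_i}$ with $c_i \geq 1$ and $T_i$ a standard monomial not involving $[w]$.

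Applying $D$ to $[u][w] = q^{d(w)}[w][u]$ and using $D([u])=0$ yields $[u]D([w]) = q^{d(w)} D([w])[u]$. Iterating Lemma~\ref{lemma-u-w-relations}(i) gives the commutation $S_i[u] = q^{-d(S_i)}[u]S_i$, so the right-hand side becomes $\sum_i q^{d(w)-d(S_i)}\beta_i [u] S_i$. Since $[u]$ is the unique minimum Pl\"ucker coordinate, each $[u]S_i$ is again a standard monomial, and distinct $S_i$ yield distinct $[u]S_i$. Comparing coefficients then forces
\[
\beta_i\bigl(1 - q^{d(w)-d(S_i)}\bigr) = 0
\]
for each $i$, and because $q$ is not a root of unity we conclude $d(S_i) = d(w)$ for every $S_i$ in the support.

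Now $d(S_i) = d(T_i) + c_i\,d(w)$, and in the range $2 \leq k \leq n-2$ with $2k\leq n$ we have $u \cap w = \emptyset$, whence $d(w) = k > 0$. Combined with $d(T_i)\geq 0$ and $c_i \geq 1$, the equation $d(T_i) = (1-c_i)\,d(w)$ forces $c_i = 1$ and $d(T_i) = 0$. The only Pl\"ucker coordinate $[I]$ with $d(I) = 0$ is $[u]$, so $T_i = [u]^{b_i}$ with $b_i \geq 0$, giving $S_i = [u]^{b_i}[w]$. Assumption (ii) rules out $b_i \geq 1$, leaving only $S_i = [w]$, whence $D([w]) = \alpha[w]$ for some $\alpha \in \k$. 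The only real subtlety is making sure no hidden terms survive the reduction; the standard monomial basis together with Lemma~\ref{lemma-no-standard-monomials}(ii) and hypothesis (ii) supplies just enough rigidity that the coefficient-matching step closes cleanly.
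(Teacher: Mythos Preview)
Your proof is correct and follows essentially the same route as the paper's: apply $D$ to $[u][w]=q^{d(w)}[w][u]$, use $D([u])=0$ to obtain $[u]D([w])=q^{d(w)}D([w])[u]$, commute $[u]$ past each standard monomial $S_i$ in $\supp(D([w]))$, and compare coefficients to force $d(S_i)=d(w)$, then deduce $S_i=[u]^{b_i}[w]$ and invoke hypothesis~(ii).

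One small point: you impose the extra hypothesis $2k\leq n$ in order to get $u\cap w=\emptyset$ and hence $d(w)=k$. This restriction is unnecessary and the lemma as stated does not carry it. All your argument actually uses is $d(w)>0$, and this holds throughout the standing range $2\leq k\leq n-2$: indeed $d(w)=0$ would force $w\subseteq u$ and hence $w=u$, i.e.\ $k=n$. So simply replace the sentence invoking $2k\leq n$ with the observation that $d(w)>0$ since $w\neq u$, and your proof covers the full range.
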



\begin{proof}
Recall, from Lemma~\ref{lemma-no-standard-monomials}, that $[w]$ must occur in any standard monomial contained in $\supp(D([w]))$. Suppose that $S=[u]^{a}[I_1]^{a_1}\dots [I_m]^{a_m}[w]^b\in \supp(D([w]))$ with $u<I_1$ and $I_m<w$ while  $a,a_i\geq 0, b>0$. Suppose  that $S$ occurs in $D([w])$ with nonzero coefficient $\alpha\in\k$.
Note that $d(I_i)>0$ for each $i$.
Apply $D$ to the equation $[u][w]=q^{d(w)}[w][u]$, remembering that $D([u])=0$, to obtain
\[
[u]D([w])=q^{d(w)}D([w])[u] 
\]
Examination of the standard monomials of the form $[u]S$ in this equation reveals that  
\begin{eqnarray*}
\alpha[u]^{a+1}[I_1]^{a_1}\dots [I_m]^{a_m}[w]^b 
&=&
\alpha q^{d(w)}[u]^a[I_1]^{a_1}\dots [I_m]^{a_m}[w]^b[u]\\
&=&
\alpha q^{d(w)-\left(d(w)b+a_1d(I_1)+\dots+a_md(I_m)\right)}[u]^{a+1}[I_1]^{a_1}\dots [I_m]^{a_m}[w]^b.
\end{eqnarray*}
As $q$ is not a root of unity, the only possibility is that the power of $q$ on the right hand side is $q^0$, and this is only possible for $b=1$ and $a_1=\dots=a_m=0$. Thus the only possible terms in $\supp(D[w])$ are of the form $[u]^a[w]$. Taking into account condition (ii) in the statement of the lemma, we see that $a>0$ is not allowed, so $D[w]=\alpha[w]$, as required. 
\end{proof} 


Recall from Lemma~\ref{lemma-column-derivations} that there are column derivations $D_i$, for $i=1,\dots,n$ such that $D_{i}([I])=\delta(i\in I)[I]$ for each quantum Pl\"ucker coordinate $[I]$. The results of this section are summarised in the following proposition. 
\begin{proposition}\label{proposition-adjusting-derivations}
Let $D$ be a derivation of $\oqgkn$. Then there is a derivation $D'$ of $\oqgkn$ with $D'([u])=D'([w])=0$ and such that 
$(D'-D)$ is a linear combination of derivations of the form $\ad_z$, with $z\in\oqgkn$, and column derivations $D_i$ for $i=1,\dots,n$. 
\end{proposition}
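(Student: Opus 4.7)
The plan is to chain together all the adjustment lemmas proved earlier in the section in a specific order, finishing with one additional column-derivation adjustment to eliminate the remaining term on $[w]$.

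First, starting from an arbitrary derivation $D$, I would invoke Corollary~\ref{corollary-homogeneous-terms} to replace $D$ by a derivation $D_1$ differing from $D$ by a sum of inner derivations, and such that every homogeneous component of $D_1([u])$ has the form $\lambda_a[u]^a$ with $a\geq 1$. Next, applying Corollary~\ref{corollary-no-terms-in-dw} to $D_1$, I would obtain a derivation $D_2$ with $D_2 - D_1$ a sum of inner derivations, $D_2([u]) = D_1([u])$ (so still of the same pure form $\sum \lambda_a[u]^a$), and with no standard monomial of the form $[u]^{a-1}[w]$ for $a>1$ appearing in $\supp(D_2([w]))$. At this point Lemma~\ref{lemma-only-u1} applies and forces $D_2([u]) = \lambda_1[u]$ for a single scalar $\lambda_1\in\k$.

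Then I would use the corollary just after Lemma~\ref{lemma-only-u1} to set $D_3 := D_2 - \lambda_1 D_1$, so that $D_3([u]) = 0$, the $[w]$-support condition is preserved, and $D_3 - D_2$ is a scalar multiple of the column derivation $D_1$. Now $D_3$ satisfies the hypotheses of Lemma~\ref{lemma-no-terms-in-w}, which gives $D_3([w]) = \alpha[w]$ for some $\alpha\in\k$. The final ingredient is the column derivation $D_n$: since $n\notin\{1,\dots,k\}$ (as $n\geq 4$ and $k\leq n-2$, in particular $n>k$) we have $D_n([u]) = 0$, while $n\in\{n-k+1,\dots,n\}$ gives $D_n([w]) = [w]$. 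Setting $D' := D_3 - \alpha D_n$ then yields $D'([u]) = 0$ and $D'([w]) = 0$ simultaneously, and
\[
D' - D \;=\; \bigl(\text{sum of inner derivations}\bigr) \;-\; \lambda_1 D_1 \;-\; \alpha D_n,
\]
which is the required form.

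There is really no serious obstacle here: the proposition is a bookkeeping statement that packages the sequence of reductions already carried out in Lemmas~\ref{lemma-remove-terms}--\ref{lemma-no-terms-in-w}. The one point deserving care is the ordering: one must adjust $D([u])$ down to $\lambda_1[u]$ \emph{before} dealing with $[w]$, because the earlier lemmas that control $\supp(D([u]))$ use the $q$-commutation relation $[u][v] = q[v][u]$ and need to be performed while the inner-derivation adjustments do not disturb the $[w]$-support conditions already established. Checking at each step that the preceding adjustment is preserved (in particular that $\ad_z$ for the various choices of $z$ does not reintroduce forbidden standard monomials in $\supp(D([w]))$) is immediate from the explicit formulas used in each adjustment lemma, so the compilation is mechanical.
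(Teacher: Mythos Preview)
Your proof is correct and follows essentially the same route as the paper: the paper's $D^{(1)}, D^{(2)}, D^{(3)}, D'$ correspond exactly to your $D_1, D_2, D_3, D'$, invoking the same lemmas in the same order and finishing with the same $D_n$ adjustment. One cosmetic issue: you use $D_1$ both for your first adjusted derivation and for the column derivation, which is a notational clash worth avoiding.
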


\begin{proof} We know the following facts hold for any derivation $D$ of $\oqgkn$ and so will hold for the any derivation that occurs when we adjust a given derivation by adding or subtracting inner derivations and scalar multiples of the $D_i$: (i) the degree zero parts of $D([u])$ and $D([w])$ are both zero, see Lemma~\ref{lemma-degree-zero-term}; (ii) the degree one part of $D([u])$ is a scalar multiple of $[u]$ and, similarly, the degree one part of $D([w])$ is a scalar multiple of $[w]$, see Corollary~\ref{corollary-degree-one-part}; (iii) 
any standard monomial occurring in the support of $D([u])$ must start with at least one occurrence of $[u]$ and, similarly, any standard monomial occurring in the support of $D([w])$ must end with at least one occurrence of $[w]$, see Lemma~\ref{lemma-no-standard-monomials}.

Let $D$ be an arbitrary derivation of $\oqgkn$. By Corollary~\ref{corollary-homogeneous-terms} there is a derivation $D^{(1)}$ of $\oqgkn$ such that $D^{(1)}-D$ is a sum of inner derivations and such that the homogeneous terms of $D^{(1)}([u])$ are of the form $\lambda_a[u]^a$ for $a\geq 1$ and $\alpha_a\in\k$. 

By Corollary~\ref{corollary-no-terms-in-dw} there is a derivation $D^{(2)}$ of $\oqgkn$ such that 
$D^{(2)}-D^{(1)}$ is a sum of inner derivations, 
the homogeneous terms of $D^{(2)}([u])=D^{(1)}([u])$ are of the form $\lambda_a[u]^a$ for $a\geq 1$ and $\alpha_a\in\k$ and such that there are no terms of the form $[u]^{a-1}[w]$ with $a>1$ occur in $\supp(D^{(2)}([w]))$. 
By Lemma~\ref{lemma-only-u1}, $D^{(2)}([u])=\lambda[u]$ for some $\lambda\in\k$. 

Set $D^{(3)}:=D^{(2)}-\lambda D_1$. Then $D^{(3)}([u])=0$ while $D^{(3)}([w])=D^{(2)}([w])-\lambda D_1([w]) = D^{(2)}([w])$, as 
$ D_1([w])=0$. Hence,  there are no terms of the form $[u]^{a-1}[w]$ with $a>1$ in $D^{(3)}([w])$.  It follows from Lemma~\ref{lemma-no-terms-in-w} that $D^{(3)}([w])=\alpha[w]$ for some $\alpha\in\k$. 

Finally, set $D':=D^{(3)}-\alpha D_n$. Then $D'([u])=D^{(3)}[u]=0$, as $D_n([u])=0$ and $D'([w])=D^{(3)}([w])-\alpha D_n([w])
=0$. The passage from $D$ to $D'$ via $D^{(1)}, D^{(2)}, D^{(3)}$ only involves adjustments by adding or subtracting derivations of the form $\ad_z$, with $z\in\oqgkn$, and $D_i$ for $i=1,\dots,n$ at each stage, so the required result follows.
\end{proof}




\section{Transferring derivations of $\oqgkn$ to $\oqmkp$} \label{section-transferring-derivations}

Throughout this section, we assume that $2k\leq n$.

Recall the dehomogenisation equality from Section~\ref{section-dehomogenisation-equality}
\[
T=\oqgkn[[u]^{-1}]=\oqmkp[y,y^{-1}; \sigma].
\]
Given a derivation $D$ of $\oqgkn$ with $D([u])=D([w])=0$, we may extend $D$ to $T$ by setting $D([u]^{-1})=0$ and then transfer, via the dehomogenisation equality, to $\oqmkp[y,y^{-1}; \sigma]$. We then know that $D(y)=D([u])=0$. We retain the notation $D$ for this extension to $T$. \\

Recall that in Section~\ref{section-dehomogenisation-equality} we set $R:=\oqmkp$ where $p=n-k$. The quantum matrix generators $x_{ij}$ of $R$ were defined in Section~\ref{section-dehomogenisation-equality}: 
\[
x_{ij}:=[1\dots,\widehat{k+1-i},\dots k, j+k][u]^{-1}\in T.
\]

Our aim in this section is to show that $D(R)\subseteq R$ for such a derivation $D$. We will use a pair of gradings of $T$ that were developed in \cite[Section 6]{ll-aut-qg} to discuss a similar result for certain automorphisms of $\oqgkn$. \\

As $2k\leq n$, we know that $k\leq n-k=p$ and so $R$ has at least $k$ columns and the 
quantum minor 
$[I\mid J]:= [1\dots k\mid p+1-k, \dots, p]\in R$ is defined (we are using all the rows 
of $R=\oqmkp$ and the last 
$k$ columns). As noted in \cite[Lemma 6.1]{ll-aut-qg}, $x_{ij}[I\mid J]=q[I\mid J]x_{ij}$ when $j<p+1-k$ while $x_{ij}[I\mid J]=[I\mid J]x_{ij}$ when $j\geq p+1-k$. As a consequence, $[I\mid J]$ is a normal element in $R$ and also in $T$. \\

\begin{lemma}\label{lemma-d-of-determinant-is-zero}
Let $D$ be a derivation of $\oqgkn$, where $2k\leq n$, with $D([u])=D([w])=0$. Let $[I\mid J]$ be defined as in the previous paragraph. Then $D([I\mid J])=0$.
\end{lemma}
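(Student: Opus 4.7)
The plan is to compute $[I\mid J]$ explicitly in $T$ using the dehomogenisation formula recalled in Section~\ref{section-dehomogenisation-equality}, and then to read off the result directly from the Leibniz rule.

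First, I would identify $[I\mid J] = [1,\dots,k \mid p+1-k,\dots,p]$ as an element of $T$. Applying the formula
\[
[I\mid J] = [\{1,\dots,k\}\backslash (k+1-I) \sqcup (k+J)][u]^{-1},
\]
with $I=\{1,\dots,k\}$ we get $k+1-I = \{1,\dots,k\}$, so $\{1,\dots,k\}\backslash (k+1-I) = \emptyset$. With $J = \{p+1-k,\dots,p\}$ and $p=n-k$, we get $k+J = \{n-k+1,\dots,n\} = w$. Hence
\[
[I\mid J] = [w][u]^{-1}
\]
in $T$.

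Second, I would invoke the extension of $D$ to $T$. The hypothesis $D([u])=0$, together with $0 = D(1) = D([u][u]^{-1}) = D([u])[u]^{-1} + [u]D([u]^{-1})$, forces $D([u]^{-1})=0$ (this is exactly the extension chosen at the start of the section). Using the Leibniz rule and the hypothesis $D([w])=0$,
\[
D([I\mid J]) = D([w])[u]^{-1} + [w]D([u]^{-1}) = 0.
\]

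There is no real obstacle here: the entire content of the lemma is that the particular quantum minor $[I\mid J]$ happens to equal $[w][u]^{-1}$ after dehomogenisation, and both factors are annihilated by $D$. The only thing worth flagging carefully in the write-up is the verification of the set-theoretic identities $\{1,\dots,k\}\backslash (k+1-I) = \emptyset$ and $k+J = w$, which uses the hypothesis $2k \leq n$ (so that the columns $p+1-k,\dots,p$ are defined, i.e.\ $p-k+1 \geq 1$).
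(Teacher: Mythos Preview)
Your proof is correct and follows exactly the same route as the paper: identify $[I\mid J]=[w][u]^{-1}$ via the dehomogenisation formula, then conclude from $D([u])=D([w])=0$ and the Leibniz rule. You simply spell out in more detail the set-theoretic verification and the computation $D([u]^{-1})=0$ that the paper leaves implicit.
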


\begin{proof} The discussion at the end of Section~\ref{section-dehomogenisation-equality} shows that 
$$[I\mid J] = [p+1\dots n][1\dots k]^{-1}=[w][u]^{-1};$$ 
and so 
$D([I\mid J])=D([w][u]^{-1})=0$.
\end{proof} 

Also, we can calculate how $[I\mid J]$ commutes with $[u]=[1\dots k]$. Note that $k<n-k+1=p+1$, as $2k\leq n$. 
Thus the index sets $\{1,\dots, k\}$ and $\{p+1,\dots, n\}$ do not overlap, and 
\[
[u][I\mid J]= [u][w][u]^{-1} = q^k[w][u][u]^{-1}
= q^k[w][u]^{-1}[u] = q^k[I\mid J]\,[u], 
\]
where the second equality comes from Lemma~\ref{lemma-u-w-relations}.  \\

The two gradings that were used in \cite[Section 6]{ll-aut-qg} are defined by considering how elements of $T$ commute with $y=[u]$ and with $[I\mid J]$.\\

 We set $T_i:=\{a\in T\mid yay^{-1}=q^ia\}$ and $T^{(i)}:=\{a\in T\mid 
[I\mid J]a[I\mid J]^{-1}=q^{-i}a\}$. 

\begin{lemma} 
(i) $T=\bigoplus_{i=1}^{\infty}T_i$\\
(ii) $T=\bigoplus_{i=\mz}T^{(i)}$\\
(iii) $(T^{(0)}\cup T^{(1)})\cap T_1\subseteq \oqmkp=R$
\end{lemma}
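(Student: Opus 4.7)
The plan is to establish both direct sum decompositions by checking the conjugation weights of the algebra generators of $T$, and then to deduce (iii) by writing an arbitrary element of $T_1$ in a normal form and reading off the weight constraints imposed by membership in $T^{(0)}$ or $T^{(1)}$.

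For (i), observe that $yx_{ij}y^{-1}=\sigma(x_{ij})=qx_{ij}$ while $yy^{\pm 1}y^{-1}=y^{\pm 1}$, so $x_{ij}\in T_1$ and $y^{\pm 1}\in T_0$ (the sum in the statement should therefore run from $i=0$). Since $T_i\cdot T_j\subseteq T_{i+j}$ and these elements generate $T$, every monomial $x_{i_1j_1}\cdots x_{i_tj_t}y^m$ lies in $T_t$; hence $T=\sum_{i\ge 0}T_i$. Directness is automatic because the $T_i$ are the $q^i$-eigenspaces of conjugation by $y$ and the scalars $q^i$ are distinct, $q$ not being a root of unity. Part (ii) is proved by the same template, using the commutation relations recorded just before the lemma: $[I\mid J]y[I\mid J]^{-1}=q^{-k}y$, so $y\in T^{(k)}$, and $x_{ij}\in T^{(\epsilon_{ij})}$ where $\epsilon_{ij}=1$ when $j<p+1-k$ and $\epsilon_{ij}=0$ otherwise. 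Every generating monomial is then an $[I\mid J]$-eigenvector with integer weight, so $T=\bigoplus_{i\in\mz}T^{(i)}$.

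For (iii), the normal form coming from (i) lets me write any $a\in T_1$ uniquely as $a=\sum_{m\in\mz}r_my^m$ with $r_m$ in the degree-one component $R_1=\bigoplus_{i,j}\k\,x_{ij}$ of $R=\oqmkp$. Splitting $R_1=R_1^{(0)}\oplus R_1^{(1)}$ according to the two values of $\epsilon_{ij}$ gives $r_m=r_m^{(0)}+r_m^{(1)}$, and $r_m^{(\epsilon)}y^m$ has $[I\mid J]$-weight $\epsilon+mk$. If $a$ lies in $T^{(0)}$ (respectively $T^{(1)}$), then every surviving summand must satisfy $\epsilon+mk=0$ (respectively $\epsilon+mk=1$).

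The remaining step is purely numerical: under the standing assumption $k\ge 2$, the only solutions of $\epsilon+mk\in\{0,1\}$ with $m\in\mz$ and $\epsilon\in\{0,1\}$ are $(m,\epsilon)=(0,0)$ and $(m,\epsilon)=(0,1)$. Hence $a=r_0^{(\epsilon)}\in R_1\subseteq R$ in either case, establishing (iii). I do not foresee a real obstacle here; the only care needed is in keeping the sign conventions of the two gradings straight and in phrasing the integrality argument so that it visibly invokes $k\ge 2$ --- precisely where the standing exclusion of the quantum projective space case $k=1$ is doing its work.
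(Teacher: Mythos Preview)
Your argument is correct, and you have in fact supplied a self-contained proof where the paper does not: the paper's proof of this lemma is simply a citation to \cite[Lemmas 6.2(i), 6.3(i), 6.4]{ll-aut-qg}. Your direct verification via eigenspace weights of the generators, followed by the numerical argument $\epsilon+mk\in\{0,1\}$ forcing $m=0$ when $k\ge 2$, is exactly the kind of reasoning that lies behind those cited lemmas, so you are reconstructing rather than diverging from the intended proof. Your observation that the index in (i) should start at $i=0$ (since $y^{\pm1}$ and scalars lie in $T_0$) is also correct; this is a typo in the statement. One small point of phrasing: the normal form $a=\sum_m r_m y^m$ with $r_m\in R$ comes from the skew Laurent structure of $T$, and it is then the $T_1$-condition that forces each $r_m$ into $R_1$ --- you use this correctly but attribute it slightly imprecisely to ``the normal form coming from (i)''.
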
 

\begin{proof}
These results are established in \cite[Lemma 6.2(i), Lemma 6.3(i), Lemma 6.4]{ll-aut-qg}
\end{proof}


\begin{theorem}\label{theorem-dr-in-r}
Suppose that $2k\leq n$ and that $D$ is a derivation of $\oqgkn$ such that $D([u])=D([w])=0$. Then $D(R)\subseteq R$. 
\end{theorem}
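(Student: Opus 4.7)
The plan is to extend $D$ to the localisation $T=\oqgkn[[u]^{-1}]=R[y^{\pm 1};\sigma]$ and exploit the two gradings $T=\bigoplus T_i$ and $T=\bigoplus T^{(i)}$ from the previous lemma. Since $D([u])=0$, applying the Leibniz rule to $[u][u]^{-1}=1$ forces $D([u]^{-1})=0$, so the extension of $D$ to $T$ satisfies $D(y)=D(y^{-1})=0$. Moreover, by Lemma~\ref{lemma-d-of-determinant-is-zero}, $D([I\mid J])=0$.

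Next I would show that $D$ preserves both gradings. If $a\in T_i$ then $ya=q^i ay$; applying $D$ and using $D(y)=0$ yields
\[
y\,D(a) = q^i\,D(a)\,y,
\]
so $D(a)\in T_i$. Entirely analogously, since $D([I\mid J])=0$, for $a\in T^{(i)}$ the identity $[I\mid J]\,a=q^{-i}a\,[I\mid J]$ differentiates to give $D(a)\in T^{(i)}$. Hence $D$ is homogeneous with respect to each of the two gradings on $T$.

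Now I would verify that every algebra generator $x_{ij}$ of $R$ lies in the subspace $T_1\cap(T^{(0)}\cup T^{(1)})$. The relation $yx_{ij}=qx_{ij}y$ recorded in Section~\ref{section-dehomogenisation-equality} shows $x_{ij}\in T_1$. The commutation relations stated just before Lemma~\ref{lemma-d-of-determinant-is-zero}, namely $x_{ij}[I\mid J]=q[I\mid J]x_{ij}$ for $j<p+1-k$ and $x_{ij}[I\mid J]=[I\mid J]x_{ij}$ otherwise, place $x_{ij}$ in $T^{(1)}$ or $T^{(0)}$ respectively. Combining the preceding paragraph with the inclusion $(T^{(0)}\cup T^{(1)})\cap T_1\subseteq R$, I conclude that $D(x_{ij})\in R$ for every generator $x_{ij}$.

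Finally, since $R$ is generated as a $\k$-algebra by the $x_{ij}$, the Leibniz rule propagates the conclusion: if $D(a),D(b)\in R$ and $a,b\in R$, then $D(ab)=D(a)b+aD(b)\in R$. An easy induction on the length of a monomial in the $x_{ij}$ then gives $D(R)\subseteq R$, which is the desired statement. None of the steps is technically delicate — the whole argument is just the observation that $D$ kills the two normal elements controlling the gradings, so the only potential obstacle is checking the $T^{(i)}$-degree of each $x_{ij}$, and this is already recorded in the excerpt.
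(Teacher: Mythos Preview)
Your proposal is correct and follows essentially the same argument as the paper's own proof: extend $D$ to $T$, use $D(y)=D([I\mid J])=0$ to show $D$ preserves each graded piece $T_i$ and $T^{(i)}$, observe that each generator $x_{ij}$ lies in $(T^{(0)}\cup T^{(1)})\cap T_1\subseteq R$, and conclude via the Leibniz rule. Your write-up is slightly more explicit about the extension to $T$ and the final induction over monomials, but the structure and the key steps match the paper exactly.
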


\begin{proof} 

It is enough to show that $D(x_{ij})\in R$ for each generator $x_{ij}$ of $R$. 

Note that $x_{ij}\in (T^{(0)}\cup T^{(1)})\cap T_1$. This claim follows from  the commutation rules given in \cite[Lemma 6.1]{ll-aut-qg} and the fact that $yx_{ij}=qx_{ij}y$.

Let $a\in T_1$. Then $ya =qay$. Apply $D$ to this equation, noting that $D(y)=0$, to obtain $yD(a)=qD(a)y$ so that $D(T_1)\subseteq T_1$. Similar calculations, using the fact that $D([I\mid J])=0$  show that $D(T^{(0)})\subseteq T^{(0)}$ and $D(T^{(1)})\subseteq T^{(1)}$. It follows that $D(x_{ij})\subseteq D((T^{(0)}\cup T^{(1)})\cap T_1)\subseteq (T^{(0)}\cup T^{(1)})\cap T_1 \subseteq R$.

As $D$ takes each generator $x_{ij}$ of $R$ into $R$, we see that 
$D(R)\subseteq R$, as required.
\end{proof} 




 
\section{The main theorem} 

Recall that we are  assuming  that $\k$ is a field of characteristic zero and that $q\in\k$ is a nonzero element that is not a root of unity. We are also assuming that $k\neq 1$, as in this case the quantum grassmannian is a quantum affine space, where the results are known, see \cite{ac}. As $\oq(G(n-1,n))\cong \oq(G(1,n))$, we exclude this case as well. Thus, we are assuming that $2\leq k\leq n-2$.\\

In this section, we prove Conjecture~\ref{conjecture}. The proof proceeds by first analysing the case where $2k\leq n$. The general case is then obtained by using the isomorphism $\oqgkn\cong\oqgnminuskn$. In order to avoid breaking the flow of the main result, we relegate to  an appendix a discussion concerning homogeneous derivations on non-square quantum matrices that we use in obtaining the truth of the conjecture in the case where $2k<n$.\\

In this section, in order to make reading easier,  we will use $\delta$ to denote an arbitrary derivative. \\

\subsection{The case where $2k\leq n$}

In this subsection we consider $\oqgkn$ in the case that $2k\leq n$. In this case, the dehomogenisation equality is: 
\[
\oqgkn[[u]^{-1}]=T=\oqmkp[y, y^{-1};\sigma], 
\]
where $p=n-k$. We set $R:=\oqmkp$. \\

When  $2k=n$ so that $R=\oqmkk$, it is well known that the centre of $R$ is $\k[D_q]$, where $D_q$ is the quantum determinant of $\oqmkk$. It is also well known that when $2k<n$, so that $R$ is non-square, the centre of $R$ is $K$. We also recall that the centre of $\oqgkn$ is always reduced to scalars. This follows easily from the basis of standard monomials by first observing that an element is central in $\oqgkn$ if and only if all the standard monomials in its support are central, and next by noting that there are no nontrivial central standard monomials since the only standard monomial commuting with both $[u]$ and $[w]$ are scalars by Lemma \ref{lemma-u-w-relations}.

\begin{proposition}\label{proposition-main-2k<=n}
Assume that $2k\leq n$. Then any derivation $\delta$ of $\oqgkn$, is equal, modulo inner derivations, to a linear combination of $D_1,\dots, D_n$. Furthermore, these $n$ derivations are linearly independent modulo the inner derivations.
\end{proposition}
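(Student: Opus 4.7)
The plan is to combine the reduction of Proposition \ref{proposition-adjusting-derivations} with the transfer provided by Theorem \ref{theorem-dr-in-r}, apply the known classification of derivations of $\oqmkp$ to the resulting restriction, and then translate back via the dehomogenisation equality.

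First, by Proposition \ref{proposition-adjusting-derivations} I may assume, after subtracting a suitable sum of inner derivations and column derivations $D_1,\dots,D_n$, that $\delta([u])=\delta([w])=0$. Extend $\delta$ to $T=\oqgkn[[u]^{-1}]=R[y^{\pm 1};\sigma]$ with $R:=\oqmkp$ by declaring $\delta([u]^{-1})=0$, so that $\delta$ kills $y^{\pm 1}$. Theorem \ref{theorem-dr-in-r} then gives $\delta(R)\subseteq R$. Moreover, $\delta$ preserves each eigenspace $T_i=\{a\in T:yay^{-1}=q^i a\}$, and the relation $yx_{ij}=qx_{ij}y$ shows that $R_d=T_d\cap R$ for every $d\geq 0$ (using that $q$ is not a root of unity); hence the restriction $\delta_R$ of $\delta$ to $R$ is a derivation that is homogeneous of degree zero with respect to the natural $\mathbb{N}$-grading on $R$.

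Next I apply the classification of homogeneous degree-zero derivations of $\oqmkp$: for $n=2k$ this is contained in \cite{ll-qmatrix-ders}, and for $n>2k$ it is supplied by the appendix. In either case such a derivation is a $\k$-linear combination of the row derivations $D_{i\ast}$ and the column derivations $D_{\ast j}$ with no inner remainder. Indeed, any homogeneous inner derivation $\ad_w$ of degree zero on $R$ forces every component $w_d$ with $d\neq 0$ of $w$ to be central, and the centre of $R$ is either $\k$ (non-square case) or $\k[D_q]$ with $D_q$ of positive degree (square case); so $w\in\k$ and $\ad_w=0$. Thus $\delta_R=\sum_i a_i D_{i\ast}+\sum_j b_j D_{\ast j}$ for some scalars. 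To push this back to $\oqgkn$, set $\Delta:=\delta-\sum_i a_i\widetilde{D_{i\ast}}-\sum_j b_j\widetilde{D_{\ast j}}$ on $T$. By construction $\Delta|_R=0$, and a direct computation of $\Delta(y)$ yields $\Delta(y)=c\,y$ for some $c\in\k$. Writing any Pl\"ucker coordinate as $[I]=[A\mid B]y$ gives $\Delta([I])=[A\mid B]\Delta(y)=c[I]$, so $\Delta|_{\oqgkn}$ equals $c$ times the Euler derivation $\mathcal{E}$; by Corollary \ref{corollary-sum=identity}, $\mathcal{E}=\tfrac{1}{k}\sum_{r=1}^n D_r$. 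Combining this with Corollary \ref{corollary-derivation-equalities} expresses $\delta|_{\oqgkn}$ as a $\k$-linear combination of $D_1,\dots,D_n$, as required.

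For linear independence, suppose $\sum_r\lambda_r D_r=\ad_z$ for some $z\in\oqgkn$. The left-hand side is homogeneous of degree zero and $Z(\oqgkn)=\k$ lies in degree zero, so decomposing $z=\sum z_j$ by degree and noting that $\ad_{z_j}$ is of degree $j$ forces each $z_j$ with $j\neq 0$ to be central, hence zero, and $z_0\in\k$; therefore $\ad_z=0$ and $\sum_r\lambda_r D_r=0$. Evaluating on each Pl\"ucker coordinate gives $\sum_{r\in I}\lambda_r=0$ for every $k$-subset $I\subseteq\{1,\dots,n\}$; since $2\leq k\leq n-2$, swapping a single element in $I$ forces all $\lambda_r$ to coincide, and then the equation with one $I$ (together with $k\geq 2$) forces them to be zero. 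The main obstacle in this plan is the classification of homogeneous derivations on non-square quantum matrices, which is not contained in \cite{ll-qmatrix-ders} and is precisely what the appendix has to supply; a secondary bookkeeping subtlety is that the identifications of Corollary \ref{corollary-derivation-equalities} only hold on $\oqgkn$ and not on $y$, so one must carefully track the discrepancy on $y$, and it is exactly the Euler derivation identity of Corollary \ref{corollary-sum=identity} that lets this discrepancy be absorbed into a further combination of the $D_r$.
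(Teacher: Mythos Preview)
Your proof is correct and follows essentially the same route as the paper: reduce via Proposition~\ref{proposition-adjusting-derivations}, transfer to $R=\oqmkp$ via Theorem~\ref{theorem-dr-in-r}, classify the resulting homogeneous restriction using \cite{ll-qmatrix-ders} (square case) or the appendix (non-square case), and translate back using Corollaries~\ref{corollary-derivation-equalities} and~\ref{corollary-sum=identity}. The only cosmetic differences are that the paper extracts the scalar coefficients in the square case by directly comparing degree-one components of $\delta(x_{rs})$ rather than arguing abstractly about degree-zero inner derivations, and for linear independence it uses the explicit Pl\"ucker coordinates $[I_r]=[1,\dots,k-1,k-1+r]$ and $[J_r]$ rather than your swapping argument.
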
 

\begin{proof} 
We use the same notation $\delta$ for the extension to $T$. After possibly adjusting $\delta$ by inner derivations and linear combinations of $D_1,\dots, D_n$ we may assume that  $\delta(y)=\delta([u])=\delta([w])=0$, by Proposition~\ref{proposition-adjusting-derivations}, and then $\delta(R)\subseteq R$, by Theorem~\ref{theorem-dr-in-r}. Apply $\delta$ to the equation $yx_{ij}=qx_{ij}y$ to obtain
\[
y\delta(x_{ij})=q\delta(x_{ij})y.
\]
Given this equation and the fact that $\delta(R)\subseteq R$, we conclude that $\delta(x_{ij})$ is homogeneous of degree one. \\

In order to prove the first claim, we consider the two cases (i) $2k=n$ and (ii) $2k<n$ separately in order to show the {\bf subclaim} that  
$\delta|_R$ can be written as a linear combination of the row and column derivations $D_{i*}, D_{*j}$ of the quantum matrix algebra $R$ introduced in Section~\ref{section-via-dhom}. \\

{\bf Subclaim: Case (i).} First, suppose that $2k=n$, so that $k=n-k$. In this case, $R=\oqmkk$, and so $R$ is a square quantum matrix algebra and the centre of $R$ is $\k[D_q]$, where $D_q$ is the quantum determinant of $\oqmkk$.\\

By \cite[Theorem 2.9]{ll-qmatrix-ders}, there are polynomials $P_1,\dots,P_k,Q_1,\dots, Q_k\in\k[D_q]$ and an element $z\in R$ such that 
\begin{eqnarray*}
\delta|_R=\ad_z +\sum_{i=1}^k P_iD_{i\ast} +\sum_{j=1}^{n-k} Q_jD_{\ast j},
\end{eqnarray*}
where $D_{i*}, D_{*j}$ are the row and column derivations of $R$  introduced in Section~\ref{section-via-dhom}.\\

Let $a_i$ be the constant term in $P_i$ and $b_j$ be the constant term in $Q_j$. Then 
\begin{eqnarray*}
\lefteqn{\delta(x_{rs}) -\left(\sum_{i=1}^{k} a_iD_{i\ast}(x_{rs})+\sum_{j=1}^{n-k} b_jD_{\ast j}(x_{rs})\right)}\\
&=&zx_{rs}-x_{rs}z + \sum_{i=1}^{k} (P_i-a_i)D_{i\ast}(x_{rs}) +\sum_{j=1}^{n-k} (Q_j-b_j)D_{\ast j}(x_{rs})
\end{eqnarray*}
for all $r,s$. The terms on the left side of this equation all have degree one, whereas the terms on the
right hand side have degree greater than one, because $zx_{rs}-x_{rs}z$ has no degree zero or degree one terms.
\\

It follows that both sides are zero, and so $\delta|_R=\sum_{i=1}^{k} a_iD_{i\ast}+\sum_{j=1}^{n-k} b_jD_{\ast j}$, which establishes the subclaim in the case that $2k=n$.\\

{\bf Subclaim: Case (ii).} Next, suppose that $2k<n$. In this case, $R=\oqmkp$, where $p=n-k>k$ and so $R$ is a non-square quantum matrix algebra with more columns than rows, and the centre of $R$ is $\k$. 
The proof of this case is substantially more complicated than that of Case (i) due to the fact that  \cite[Theorem 2.9]{ll-qmatrix-ders} only covers derivations for square quantum matrices. To avoid disturbing the flow of the  proof of this Proposition, the proof of this subclaim is treated in the appendix and finally  established in  Proposition~\ref{proposition-non-square}.\\

Having established the subclaim, we revert to the condition that $2k\leq n$. \\

By using Corollary~\ref{corollary-derivation-equalities} we see that 
\[
\delta|_R = \sum_{i=1}^{k} a_iD_{i\ast}+\sum_{j=1}^{n-k} b_jD_{\ast j}=(-\sum_{i=1}^k a_i\widetilde{D_i}
+\sum_{j=1}^{n-k} b_j\widetilde{D_{k+j}})|_R\,.
\] Set $\widetilde{\delta}=-\sum_{i=1}^k a_i\widetilde{D_i}+\sum_{j=1}^{n-k} b_j\widetilde{D_{k+j}}$, so that $\delta|_R=\widetilde{\delta}|_R$. Note that 
$\widetilde{\delta}(y)=-(\sum_{i=1}^{k} a_i)y$, as $\widetilde{D_i}(y)=y$ for $i=1,\dots,k$, while 
$\widetilde{D_{k+j}}(y)=0$ for $j=1,\dots,n-k$. Recall from Remark~\ref{remark-D-values} that 
$\sum_{i=1}^{n} \widetilde{D_i}$ acts trivially on $R$, while 
$\sum_{i=1}^{n} \widetilde{D_i}(y)=ky$. \\

Set $\widehat{\delta}:= \left(\frac{1}{k}\sum_{i=1}^{k} a_i\right)\left(\sum_{i=1}^{n} \widetilde{D_i}\right)$.
Then, for all $r$ and $s$, we have $\left(\widetilde{\delta}+\widehat{\delta}\right)(x_{rs}) =\widetilde{\delta}(x_{rs})+\widehat{\delta}(x_{rs})= \widetilde{\delta}(x_{rs})+0=\delta(x_{rs})$, while 
$\left(\widetilde{\delta}+\widehat{\delta}\right)(y) =
\widetilde{\delta}(y)+\widehat{\delta}(y)=-(\sum a_i)y+\left(\frac{1}{k}\sum a_i\right)(ky)=0=\delta(y)$.
As $\delta$ and $\widetilde{\delta}+\widehat{\delta}$ agree on the generating set $x_{ij},y$, they are equal as derivatives. \\

For the proof of the second part, suppose that 
\[
\ad_z+\sum_{i=1}^n a_iD_i = 0
\]
for some $z\in\oqgkn$ and $a_i\in\k$. Thus, $\ad_z([I]) +\sum_{i=1}^n a_i\delta(i\in I)[I]=0$ for each quantum Pl\"ucker coordinate $[I]\in\oqgkn$. The first term has no components in degree one and the other terms are all in degree one, so we deduce that $\ad_z([I])=0$ for each quantum Pl\"ucker coordinate $[I]\in\oqgkn$ so that $\ad_z=0$. Thus $\sum_{i=1}^{n}a_iD_i=0$.\\

For $r=1,\dots, n+1-k$ set $[I_r]:=[1,\dots,k-1,k-1+r]$, and observe that 
\[
0=\sum_{i=1}^n a_iD_i[I_r]=\left((a_1+\dots+a_{k-1})+a_{k-1+r}\right)[I_r] 
\]
Thus, 
$(a_1+\dots+a_{k-1})+a_{k-1+r}=0$ for each of these values of $r$. It follows that $a_k=a_{k+1}=\dots=a_n$. In a similar manner, set $[J_r]:=[n-k+1-r,n-k+2,\dots,n]$ for $r=0,\dots,n-k$ to observe that $a_{n-k+1-r} +(a_{n-k+2}+\dots+a_n)=0$ for these values of $r$. It follows that $a_1=\dots=a_{n-k+1}$. These two ranges of values must overlap, or else $n-k+1<k$ so that $n+1<2k\leq n$, a contradiction. Thus, $a_1=a_2=\dots=a_n$ and from this and Corollary \ref{corollary-sum=identity} it follows that each $a_i=0$. 
\end{proof}

\subsection{The general case} \label{subsection-general-case}
We have now proved our conjecture for $\oqgkn$ in the case where $2k\leq n$. In order to remove this restriction, we use the fact that $\oqgkn\cong\oqgnminuskn$, see, for example, \cite[Proposition 3.1]{ll-aut-qg}. We will use $\overline{D}$ to distinguish derivations of $\oqgnminuskn$ from derivations of $\oqgkn$. Let $\psi: \oqgkn\longrightarrow\oqgnminuskn$ be the automorphism of \cite[Proposition 3.1]{ll-aut-qg}; so that 
$\psi([I])=[w_0(\,\widehat{I}\,)]$ for each quantum Pl\"ucker coordinate $[I]$ of $\oqgkn$, where $\widehat{I}:=\{1, \dots , n\} \setminus I$ and $w_0$ is the longest element of the symmetric group $S_n$. Let $D$ be a derivation of $\oqgkn$.   
It is easy to check that $\psi D\psi^{-1}$ is a derivation of 
$\oqgnminuskn$. Similarly, if $\overline{D}$ is a derivation of $\oqgnminuskn$ then 
$\psi^{-1}\overline{D}\psi$ is a derivation of $\oqgkn$.\\

Recall that we have the derivations $D_i$ of $\oqgkn$ for $i=1,\dots, n$, with $D_i([I])=\delta(i\in I)[I]$, and similarly we have derivations $\overline{D_j}$ of $\oqgnminuskn$ for $j=1,\dots, n$. \\

For each $i$, we need to see how $\psi D_i\psi^{-1}$ acts as a derivation on $\oqgnminuskn$ in terms of the $\overline{D_j}$. \\

Note from Corollary~\ref{corollary-sum=identity} that 
$\frac{1}{n-k}\left(\sum_{j=1}^n  \overline{D_j}\right)([J])=[J]$ for each quantum Pl\"ucker coordinate $[J]\in\oqgnminuskn$

\begin{lemma}
\[
\psi D_i\psi^{-1}=\frac{1}{n-k}\left(\sum_{j=1}^n  \overline{D_j}\right)~ -~\overline{D}_{w_0(i)}
\]
\end{lemma}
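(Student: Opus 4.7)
The plan is to verify the identity by evaluating both sides on quantum Plücker coordinates of $\oqgnminuskn$ and invoking the fact that these generate the algebra, so two derivations agreeing on them must coincide.

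First I would unpack $\psi^{-1}$. Given a quantum Plücker coordinate $[J]$ of $\oqgnminuskn$, write $[J] = \psi([I])$, so $J = w_0(\widehat{I})$; since $w_0$ is an involution, this gives $\widehat{I} = w_0(J)$ and hence $I = \{1,\dots,n\} \setminus w_0(J)$. Then by definition of $D_i$ and the fact that $\psi$ is an algebra isomorphism,
\[
(\psi D_i \psi^{-1})([J]) \;=\; \psi(D_i([I])) \;=\; \delta(i \in I)\,[J].
\]
The key combinatorial observation is then that $i \in I$ iff $i \notin w_0(J)$ iff $w_0(i) \notin J$, so
\[
(\psi D_i \psi^{-1})([J]) \;=\; \bigl(1 - \delta(w_0(i)\in J)\bigr)[J].
\]

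Next I would compute the right-hand side at $[J]$. By Corollary~\ref{corollary-sum=identity} applied to $\oqgnminuskn$ (where each Plücker coordinate uses $n-k$ columns, so exactly $n-k$ of the $\overline{D_j}$ act as the identity on $[J]$),
\[
\frac{1}{n-k}\left(\sum_{j=1}^n \overline{D_j}\right)([J]) \;=\; [J],
\]
and by Lemma~\ref{lemma-column-derivations} applied in $\oqgnminuskn$, $\overline{D}_{w_0(i)}([J]) = \delta(w_0(i)\in J)\,[J]$. Thus the right-hand side also evaluates to $(1-\delta(w_0(i)\in J))[J]$, matching the left-hand side.

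Finally, since $\oqgnminuskn$ is generated as a $\k$-algebra by its quantum Plücker coordinates and both $\psi D_i \psi^{-1}$ and $\frac{1}{n-k}\sum_{j=1}^n \overline{D_j} - \overline{D}_{w_0(i)}$ are derivations of $\oqgnminuskn$ (the first because $\psi$ is an algebra isomorphism conjugating a derivation, the second as a $\k$-linear combination of derivations), their agreement on the generating set forces them to be equal. There is no real obstacle here; the only point requiring care is the bookkeeping with $\widehat{I}$ and $w_0$, specifically the equivalence $i \in I \Leftrightarrow w_0(i)\notin J$, which follows immediately from $w_0^2 = \mathrm{id}$ and the definition of complement.
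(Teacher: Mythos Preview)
Your proof is correct and follows essentially the same approach as the paper: evaluate both sides on a quantum Pl\"ucker coordinate $[J]=\psi([I])$, reduce the left-hand side to $\delta(i\in I)[J]$, rewrite $\delta(i\in I)=1-\delta(w_0(i)\in J)$ via the combinatorics of $w_0$ and complements, and identify this with the right-hand side using Corollary~\ref{corollary-sum=identity} for $\oqgnminuskn$. The only difference is cosmetic: you phrase the key identity as $i\in I\Leftrightarrow w_0(i)\notin J$ via $I=\{1,\dots,n\}\setminus w_0(J)$, whereas the paper chains truth functions $\delta(i\in I)=1-\delta(i\in\widehat{I})=1-\delta(w_0(i)\in w_0(\widehat{I}))=1-\delta(w_0(i)\in J)$, and you add an explicit sentence on why agreement on generators suffices.
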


\begin{proof}
 Let $[J]$ be a quantum Pl\"ucker coordinate in $\oqgnminuskn$ and suppose that $[J]=\psi([I])=[w_0(\widehat{I})]$ for a quantum Pl\"ucker coordinate $[I]$ of $\oqgkn$. 
 
 Before we do the calculation of $\psi D_i\psi^{-1}$, note the following evaluation of a truth function:
\[
\delta(i\in I)=1-\delta(i\in\widehat{I}) = 
1-\delta(w_0(i)\in w_0(\widehat{I})) = 
1-\delta(w_0(i)\in J).
\]

We obtain
\begin{eqnarray*}
\psi D_i\psi^{-1}([J])&=&\psi D_i([I])=\psi(\delta(i\in I)[I])=
\delta(i\in I)[J]\\
&=&
(1-\delta(w_0(i)\in J))[J]=[J]-\overline{D}_{w_0(i)}([J])\\
&=&
\left\{\frac{1}{n-k}\left(\sum_{j=1}^n  \overline{D_j}\right)~ -~\overline{D}_{w_0(i)}\right\}([J]),
\end{eqnarray*}
as required.
\end{proof}

We can now obtain our main theorem without any restriction other than $1<k<n-1$. Given that we have proved the conjecture in the case that $2k\leq n$,  it is enough to prove the result for $\oqgnminuskn$ when $2k\leq n$.

\begin{proposition}\label{proposition-main-(n-k)-case}
Assume that   $2k\leq n$. In $\oqgnminuskn$ any derivation is equal, modulo inner derivations, to a linear combination of $\overline{D_1},\dots, \overline{D_n}$. Furthermore, these $n$ derivations are linearly independent modulo the inner derivations.
\end{proposition}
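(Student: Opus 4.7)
The plan is to transfer the result from Proposition~\ref{proposition-main-2k<=n} to $\oqgnminuskn$ by conjugation with the isomorphism $\psi:\oqgkn\longrightarrow\oqgnminuskn$, exploiting the key identity
\[
\psi D_i\psi^{-1}=\frac{1}{n-k}\left(\sum_{j=1}^n\overline{D_j}\right)-\overline{D}_{w_0(i)}
\]
established in the lemma immediately preceding. The whole argument rests on the fact that conjugation by $\psi$ sends derivations to derivations and inner derivations to inner derivations, since $\psi\,\ad_z\,\psi^{-1}=\ad_{\psi(z)}$.

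For the first statement, I would start with an arbitrary derivation $\overline{\delta}$ of $\oqgnminuskn$ and pull it back to the derivation $\delta:=\psi^{-1}\overline{\delta}\psi$ of $\oqgkn$. Because the assumption $2k\leq n$ is in force, Proposition~\ref{proposition-main-2k<=n} applies to $\delta$, giving an expression $\delta=\ad_z+\sum_{i=1}^n a_iD_i$ for some $z\in\oqgkn$ and scalars $a_i\in\k$. Pushing forward, I obtain
\[
\overline{\delta}=\psi\delta\psi^{-1}=\ad_{\psi(z)}+\sum_{i=1}^na_i\,\psi D_i\psi^{-1}=\ad_{\psi(z)}+\frac{\sum_ia_i}{n-k}\sum_{j=1}^n\overline{D_j}-\sum_{i=1}^na_i\,\overline{D}_{w_0(i)},
\]
which is exactly the desired expression as a linear combination of the $\overline{D_j}$ modulo an inner derivation.

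For linear independence, suppose $\sum_{j=1}^n b_j\overline{D_j}=\ad_{\overline w}$ for some $\overline w\in\oqgnminuskn$ and $b_j\in\k$. I would first derive the companion identity (by the same $\delta(j\in w_0(\widehat{I}))=1-\delta(w_0(j)\in I)$ computation used in the preceding lemma)
\[
\psi^{-1}\overline{D_j}\psi=\frac{1}{k}\sum_{i=1}^nD_i-D_{w_0(j)}
\]
for each $j$. Conjugating the assumed relation by $\psi^{-1}$ and substituting gives
\[
\sum_{i=1}^n\left(\frac{\sum_jb_j}{k}-b_{w_0^{-1}(i)}\right)D_i=\ad_{\psi^{-1}(\overline w)},
\]
so by the linear independence half of Proposition~\ref{proposition-main-2k<=n} each coefficient vanishes; this forces all the $b_j$ to share a common value $c=\frac{1}{k}\sum_jb_j$, whence $kc=nc$. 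Since $k<n$ this yields $c=0$, and thus every $b_j=0$.

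The only substantive point to verify is that conjugation by the algebra isomorphism $\psi$ sends inner derivations to inner derivations with the obvious formula $\psi\ad_z\psi^{-1}=\ad_{\psi(z)}$ and respects $\k$-linear combinations of derivations; beyond that everything is bookkeeping with the two companion identities for $\psi D_i\psi^{-1}$ and $\psi^{-1}\overline{D_j}\psi$. There is no real obstacle, because the hard work (Propositions~\ref{proposition-main-2k<=n} and the structural lemma above) has already been done; the only condition needed is $k<n$, which holds since we assume $2\leq k\leq n-2$.
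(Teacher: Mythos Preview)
Your argument for the first statement is exactly the paper's: pull back by $\psi$, apply Proposition~\ref{proposition-main-2k<=n}, push forward, and substitute the lemma's identity for $\psi D_i\psi^{-1}$.

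For linear independence, the paper simply says the proof ``follows in the same way as for the second part of Proposition~\ref{proposition-main-2k<=n}'', i.e.\ one should rerun the direct combinatorial argument in $\oqgnminuskn$ by evaluating on suitable Pl\"ucker coordinates. Your route is different and in fact tidier: you derive the companion identity $\psi^{-1}\overline{D_j}\psi=\tfrac{1}{k}\sum_i D_i-D_{w_0(j)}$ (a correct computation, parallel to the one in the preceding lemma), conjugate the assumed dependence back to $\oqgkn$, and invoke the already-proven independence half of Proposition~\ref{proposition-main-2k<=n}. This avoids repeating the ad hoc choice of Pl\"ucker coordinates and makes the transfer fully functorial; the paper's approach, by contrast, is self-contained at this step. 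Both are valid. One small remark: your closing sentence that ``the only condition needed is $k<n$'' refers just to the last step $(n-k)c=0\Rightarrow c=0$; the earlier appeal to Proposition~\ref{proposition-main-2k<=n} of course requires the standing hypothesis $2k\le n$, which you correctly invoked above.
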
 

\begin{proof}
Let $\overline{D}$ be a derivation on $\oqgnminuskn$. Then $\psi^{-1}\overline{D}\psi$ is a derivation on $\oqgkn$. Hence,
\[
\psi^{-1}\overline{D}\psi = \ad_z+ \sum_{i=1}^n a_iD_i
\]
for some $z\in\oqgkn$ and $a_i\in\k$, by Proposition~\ref{proposition-main-2k<=n}.

Therefore,
\[
\overline{D}=\ad_{\psi(z)}+\sum_{i=1}^n a_i\psi D_i\psi^{-1}
=
\ad_{\psi(z)}+\sum_{i=1}^n a_i\left\{\frac{1}{n-k}\left(\sum_{j=1}^n  \overline{D_j}\right)~ -~\overline{D}_{w_0(i)}\right\}, 
\]
and the first claim follows. 

The proof of the second part follows in the same way as for the second part of Proposition~\ref{proposition-main-2k<=n}.
\end{proof}

We are now ready to state and prove our main result.

\begin{theorem} Let $2\leq k\leq n-2$. Then any derivation of $\oqgkn$  is equal, modulo inner derivations, to a linear combination of $D_1,\dots, D_n$. Furthermore, these $n$ derivations are linearly independent modulo the inner derivations.
\end{theorem}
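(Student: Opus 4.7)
The plan is to deduce the theorem by combining the two preceding propositions, treating the cases $2k\le n$ and $2k>n$ separately, with the second case reduced to the first via the isomorphism $\oqgkn\cong\oqgnminuskn$.

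First I would dispose of the case $2k\le n$: here Proposition~\ref{proposition-main-2k<=n} gives directly that every derivation of $\oqgkn$ equals an inner derivation plus a $\k$-linear combination of $D_1,\dots,D_n$, and that these are linearly independent modulo inner derivations. There is nothing more to do in this case.

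For the case $2k>n$, set $k':=n-k$. Then $2k'=2n-2k<n$, so in particular $2k'\le n$, and Proposition~\ref{proposition-main-(n-k)-case} (applied with the role of $k$ played by $k'$) asserts that every derivation of $\oq(G(n-k',n))=\oqgkn$ is, modulo inner derivations, a linear combination of the column derivations of $\oq(G(n-k',n))$, and that these are linearly independent modulo the inner derivations. Since the column derivations of $\oq(G(n-k',n))$ are exactly the $D_1,\dots,D_n$ of $\oqgkn$ under the identification $k=n-k'$, this gives the claim in this case as well.

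Putting the two cases together yields the theorem. I do not expect any real obstacle: all the substantive work has been done in Proposition~\ref{proposition-main-2k<=n} (the case $2k\le n$) and in Proposition~\ref{proposition-main-(n-k)-case} (which transports that case to $\oqgnminuskn$ via the isomorphism $\psi$ of \cite[Proposition 3.1]{ll-aut-qg}). The only care needed is to check that the two cases $2k\le n$ and $2k>n$ together exhaust the range $2\le k\le n-2$, which they obviously do, and that the linear independence statement transfers through $\psi$; but the latter is already built into the statement of Proposition~\ref{proposition-main-(n-k)-case}, so no further argument is required.
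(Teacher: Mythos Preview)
Your proposal is correct and mirrors the paper's own proof essentially verbatim: the case $2k\le n$ is handled directly by Proposition~\ref{proposition-main-2k<=n}, and for $2k>n$ one sets $k':=n-k$ (so $2k'<n$) and invokes Proposition~\ref{proposition-main-(n-k)-case} with $k'$ in place of $k$ to conclude for $\oq(G(n-k',n))=\oqgkn$. The additional remarks you make about exhaustiveness of the two cases and the transfer of linear independence are accurate and require no further justification.
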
 

\begin{proof}
The result in the case that $2k\leq n$ has been established in Proposition~\ref{proposition-main-2k<=n}; so suppose that $2k>n$. Set $k':=n-k$ then $2k'<n$ and $n-k'=k$. As $2k'<n$,  the result holds for $\oqgkdashn$. It then follows that 
the result holds for $\oqgkn=\oqgnminuskdash$, by using Proposition~\ref{proposition-main-(n-k)-case}.
\end{proof}

~\\

Recall that the Hochschild cohomology group in degree one of a ring $R$ denoted by ${\rm HH}^1(R)$, is defined by:
\[
{\rm HH}^1(R):={\rm Der}(R)/{\rm InnDer}(R),
\]
where ${\rm InnDer}(R):=\{ad_z\mid z\in R\}$ is the Lie algebra of inner derivations of $R$. It is well known that ${\rm HH}^1(R)$ is a module over ${\rm HH}^0(R):=Z(R)$.\\

The following corollary is immediate from the above theorem. 

\begin{corollary} Let $2\leq k\leq n-2$. 
The first Hochschild cohomology group of the quantum grassmannian, ${\rm HH}^1(\oqgkn)$ is an $n$-dimensional vector space over $\k$ with basis (the cosets of) $D_1,\dots,D_n$.

\end{corollary}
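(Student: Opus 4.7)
The plan is to read this off directly from the theorem immediately preceding the corollary, together with the definition of ${\rm HH}^1$ recalled just above. By definition, ${\rm HH}^1(\oqgkn) = {\rm Der}(\oqgkn)/{\rm InnDer}(\oqgkn)$, so I need only verify that the cosets $\overline{D_1},\dots,\overline{D_n}$ both span and are linearly independent in this quotient.

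First I would handle spanning. The theorem states that any derivation $D$ of $\oqgkn$ can be written as $D = \ad_z + \sum_{i=1}^n a_i D_i$ for some $z\in\oqgkn$ and scalars $a_i\in\k$. Passing to the quotient by ${\rm InnDer}(\oqgkn)$, the inner derivation $\ad_z$ vanishes, so $\overline{D} = \sum_{i=1}^n a_i \overline{D_i}$ in ${\rm HH}^1(\oqgkn)$. Hence $\{\overline{D_1},\dots,\overline{D_n}\}$ is a spanning set.

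Next I would handle linear independence. This is precisely the second assertion of the theorem: the derivations $D_1,\dots,D_n$ are linearly independent modulo inner derivations. Translated to the quotient, this says that $\overline{D_1},\dots,\overline{D_n}$ are linearly independent in ${\rm HH}^1(\oqgkn)$. Combining the two steps, these $n$ cosets form a $\k$-basis of ${\rm HH}^1(\oqgkn)$, so ${\rm HH}^1(\oqgkn)$ is an $n$-dimensional $\k$-vector space with the stated basis.

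There is effectively no obstacle at this stage — all the hard work has been done in the theorem. The only thing to be careful about is the bookkeeping: one must confirm that the $D_i$ are indeed defined on $\oqgkn$ (done in Lemma~\ref{lemma-column-derivations}) and that the definition of ${\rm HH}^1$ used here is the usual Hochschild one, which is already recalled explicitly in the paragraph preceding the corollary. Given this, the corollary is genuinely a one-line consequence of the theorem.
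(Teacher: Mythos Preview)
Your proposal is correct and matches the paper's approach exactly: the paper simply states that the corollary is immediate from the preceding theorem, and your write-up unpacks precisely what ``immediate'' means here (spanning from the first clause of the theorem, linear independence from the second). There is nothing to add or correct.
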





\appendix

\begin{section}{Derivations on non-square quantum matrices} \label{appendix-a} 

In this appendix, we prove Case (ii) of the Subclaim in the proof of Proposition~\ref{proposition-main-2k<=n}. To be more specific, 
in Case (ii) of the Subclaim, we are dealing with a derivation of $\oq(M(k,p))$, where $k<p$, that arises, via the dehomogenisation equality,  from a derivation $D$ of $\oqgkn$ that has the following properties: (i) $D([u])=D([w])=0$; (ii) $D$ is a homogeneous derivative of $\oq(M(k,p))$. As a consequence of Lemma~\ref{lemma-d-of-determinant-is-zero}, the first condition  implies that the derivative $D$ acts trivially on the rightmost quantum minor of $\oq(M(k,p))$. 

Hence, with a change of notation, throughout this appendix, we assume that we are considering derivations on $\oqmmn$ where $m<n$ 
and that we have a derivation $D$ acting on $\oqmmn$ with the following properties:

(i) the derivation $D$ is homogeneous; that is, all terms appearing non-trivially in $D(x_{rs})$ have degree one.
 
(ii) $D([1,\dots,m\mid n+1-m,\dots,n])=0$.\\

The aim in this appendix is to show that such a derivation can be written as a linear combination of the row and column derivations $D_{i*}$ and $D_{*j}$ that were introduced in Section~\ref{section-via-dhom}. With this in mind, we fix the following notation.

\begin{notation}\label{appendix-notation} Throughout the appendix, $B$ denotes the quantum matrix subalgebra of $\oqmmn$ generated by $x_{rs}$ in the first $n-m$ columns of $\oqmmn$  and $C$ denotes the square quantum matrix subalgebra of $\oqmmn$ generated by the $x_{rs}$ in the final $m$ columns. The quantum determinant of $C$ is $[I\mid J]:=[1,\dots,m\mid n-m+1,\dots,n]$ and $[I\mid J]$ is in the centre of $C$. 
\end{notation}

 \subsection{Action of derivations on first $m$ columns of $\oqmmn$} 

\begin{lemma}\label{lemma-db-in-b} Use Notation~\ref{appendix-notation}. Let $D$ be a derivation on $\oqmmn$ such that $D(x_{ij})$ is homogeneous of degree one for all $x_{ij}$ and suppose that $D([I\mid J])=0$. Then $D(B)\subseteq B$
\end{lemma}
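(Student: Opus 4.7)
The plan is to adapt, at a smaller scale, the grading argument used in Theorem~\ref{theorem-dr-in-r}. The key structural observation is that $[I\mid J]$ is a normal element of $\oqmmn$: the commutation rules recorded in \cite[Lemma 6.1]{ll-aut-qg} (and reused at the start of Section~\ref{section-transferring-derivations}) give $x_{rs}[I\mid J]=q[I\mid J]x_{rs}$ when $s\leq n-m$ (the generators of $B$), while $x_{rs}[I\mid J]=[I\mid J]x_{rs}$ when $s\geq n-m+1$ (the generators of $C$). Since conjugation by $[I\mid J]$ in the Ore localisation multiplies each generator by a power of $q$, it restricts to a diagonalisable $\k$-algebra automorphism of $\oqmmn$. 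Setting
\[
T^{(i)}:=\{a\in\oqmmn\mid a[I\mid J]=q^{i}[I\mid J]a\},
\]
one gets a $\mz_{\geq 0}$-grading $\oqmmn=\bigoplus_{i\geq 0} T^{(i)}$ in which every generator of $B$ lies in $T^{(1)}$ and every generator of $C$ lies in $T^{(0)}$. The direct-sum property is immediate from the fact that the ordered PBW monomials form a basis of simultaneous eigenvectors whose eigenvalues depend only on the number of $B$-generators they contain.

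Given a generator $x_{rs}$ of $B$, I would apply $D$ to the relation $x_{rs}[I\mid J]=q[I\mid J]x_{rs}$ and use the hypothesis $D([I\mid J])=0$. This yields $D(x_{rs})[I\mid J]=q[I\mid J]D(x_{rs})$, so $D(x_{rs})\in T^{(1)}$. On the other hand, the homogeneity hypothesis places $D(x_{rs})$ in the degree-one component of $\oqmmn$, so it is a $\k$-linear combination of the generators $x_{ij}$; among these, the $B$-generators lie in $T^{(1)}$ while the $C$-generators lie in $T^{(0)}$. The direct-sum property of the grading then forces the $C$-coefficients of $D(x_{rs})$ to vanish, whence $D(x_{rs})\in B$. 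Since $D$ is a derivation and $B$ is generated by such $x_{rs}$, it follows that $D(B)\subseteq B$.

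I do not anticipate a serious obstacle: this is essentially a one-grading version of the two-grading argument already carried out for Theorem~\ref{theorem-dr-in-r}. The only minor point to check is that the asserted commutation relations for $[1,\dots,m\mid n-m+1,\dots,n]$ remain valid in the non-square algebra $\oqmmn$, and this is covered by the references just cited.
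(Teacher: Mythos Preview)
Your proposal is correct and follows essentially the same approach as the paper: both apply $D$ to the commutation relation $x_{rs}[I\mid J]=q[I\mid J]x_{rs}$ for a generator $x_{rs}$ of $B$, use $D([I\mid J])=0$ to deduce the same relation for $D(x_{rs})$, and then combine this with the degree-one hypothesis to kill the $C$-components. The only difference is cosmetic: the paper writes out $D(x_{ij})=\sum a_{rs}x_{rs}$ and computes the commutator directly to obtain $\sum_{s>n-m}(1-q)a_{rs}x_{rs}[I\mid J]=0$, whereas you package the same calculation in the grading language $T^{(i)}$ borrowed from Section~\ref{section-transferring-derivations}.
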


\begin{proof} It is enough to show that $D(x_{ij})\in B$ for $j\leq m$. 
For such an $x_{ij}$ suppose that  $D(x_{ij})=\sum_{\substack{r\leq m\\s\leq n}} a_{rs}x_{rs}$ with $a_{rs} \in K$. Now, 
$x_{ij}[I\mid J]=q[I\mid J]x_{ij}$, by \cite[Lemma 6.1(ii)]{ll-aut-qg}.
Apply $D$ to this equation, noting that $D([I\mid J])=0$, to obtain
\[
\sum_{\substack{1\leq r\leq m\\1\leq s\leq n}}a_{rs}x_{rs}[I\mid J]=q[I\mid J] \sum_{\substack{1\leq r\leq m\\1\leq s\leq n}}a_{rs}x_{rs}.
\]
As $x_{rs}[I\mid J]=q[I\mid J]x_{rs}$ when $s\leq m$, and   $x_{rs}[I\mid J]=[I\mid J]x_{rs}$ for $s>m$, this gives 
\[ 
\sum_{r\leq m\atop s>m} (1-q)a_{rs}x_{rs}[I\mid J]=0
\]
and it follows that $a_{rs}=0$ when $s>m$, so that $D(x_{ij})\in B$, as required. 

\end{proof} 




\subsection{We can adjust $D$ so that  $D$ is  trivial on $C$}

\begin{lemma}\label{lemma-dc-in-c}
Use Notation~\ref{appendix-notation}. Let $D$ be a derivation on $\oqmmn$ such that $D(x_{ik})$ is homogeneous of degree one for all $x_{ik}$ and suppose that $D([I\mid J])=0$. Then $D(C)\subseteq C$. 
\end{lemma}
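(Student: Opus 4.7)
The plan is to mirror the proof of Lemma~\ref{lemma-db-in-b}, with the roles of $B$ and $C$ exchanged: the earlier proof worked because $[I\mid J]$ $q$-commutes with the generators in $B$; here we instead use that $[I\mid J]$ commutes outright with the generators in $C$. It suffices to show that $D(x_{ik})\in C$ for every generator $x_{ik}$ of $C$, i.e.\ with $n-m+1\leq k\leq n$.

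Since $[I\mid J]$ is the quantum determinant of the square quantum matrix algebra $C$, it is central in $C$, so $x_{ik}[I\mid J]=[I\mid J]x_{ik}$ for each such $x_{ik}$. Applying $D$ and using $D([I\mid J])=0$ yields $D(x_{ik})[I\mid J]=[I\mid J]D(x_{ik})$. By homogeneity we may write $D(x_{ik})=\sum_{r,s}a_{rs}x_{rs}$ with $a_{rs}\in K$. Substituting and using the commutation rules $x_{rs}[I\mid J]=q[I\mid J]x_{rs}$ for $s\leq n-m$ and $x_{rs}[I\mid J]=[I\mid J]x_{rs}$ for $s>n-m$ (from \cite[Lemma 6.1(ii)]{ll-aut-qg}), the terms with $s>n-m$ cancel on both sides, leaving
$$(q-1)\sum_{\substack{r\leq m\\ s\leq n-m}} a_{rs}[I\mid J]x_{rs}=0.$$
Since $q\neq 1$ and $[I\mid J]$ is a regular element of the domain $\oqmmn$, we conclude that $a_{rs}=0$ for every $s\leq n-m$. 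Hence $D(x_{ik})\in C$, and therefore $D(C)\subseteq C$.

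The argument is essentially a dual of Lemma~\ref{lemma-db-in-b}, so no genuine obstacle arises; all one needs are the commutation rules for $[I\mid J]$, the hypothesis $D([I\mid J])=0$, and the homogeneity of $D$, each of which is in hand.
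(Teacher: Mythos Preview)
Your proof is correct and is precisely the approach the paper intends: the paper's own proof simply says the result ``is proved in a similar manner to the proof of Lemma~\ref{lemma-db-in-b}, using the fact that $x_{ik}$ commutes with $[I\mid J]$ for $x_{ik}$ in $C$,'' which is exactly what you carry out.
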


\begin{proof} 
This is proved in a similar manner to the proof of Lemma \ref{lemma-db-in-b}, using the fact that $x_{ik}$ commutes with $[I\mid J]$ for $x_{ik}$ in $C$.

 
 
 \end{proof}
 We now show that we can adjust $D$ by row and column derivations so that the adjusted derivation acts trivially on $C$.

  \begin{lemma}\label{lemma-linear-combination}
  Use Notation~\ref{appendix-notation}. Let $D$ be a derivation on $\oqmmn$ such that $D(x_{rs})$ is homogeneous of degree one for all $x_{rs}$ and suppose that $D([I\mid J])=0$. There is a homogeneous derivation $D'$ of $\oqmmn$ with $D'|_C=0$ and $D'(B)\subseteq B$ such that $D'-D$ is a linear combination of $D_{i*}$, for $i=1,\dots,m$, and $D_{*j}$, for $n-m+1\leq j\leq n$, the row and column derivations of $\oqmmn$  introduced in Section~\ref{section-via-dhom}.
  \end{lemma}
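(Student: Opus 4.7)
The plan is to reduce the problem to the classification of derivations of the square quantum matrix algebra $C = \oq(M(m,m))$. By Lemma \ref{lemma-dc-in-c}, $D$ restricts to a derivation $D|_C$ of $C$, and this restriction is homogeneous of degree one since $D$ is. Because $C$ is square with quantum determinant $[I\mid J]$, I would invoke \cite[Theorem 2.9]{ll-qmatrix-ders} to write
\[
D|_C \;=\; \ad_z + \sum_{i=1}^{m} P_i\, D_{i*}^C + \sum_{j=n-m+1}^{n} Q_j\, D_{*j}^C,
\]
for some $z \in C$ and polynomials $P_i,\, Q_j$ in the quantum determinant $[I\mid J]$ of $C$, where $D_{i*}^C$ and $D_{*j}^C$ denote the row and column derivations of $C$ (with columns relabelled by the original indices of $\oqmmn$).

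The crucial step will be to show that homogeneity collapses this expression to its scalar part. Let $a_i$ and $b_j$ denote the constant terms of $P_i$ and $Q_j$, and let $z = \sum_{d \geq 0} z_d$ be the homogeneous decomposition of $z$. Every remaining summand, when evaluated on a generator $x_{rs}$, produces an element of degree at least two: $\ad_{z_0}$ vanishes because $z_0$ is a scalar; $\ad_{z_d}(x_{rs})$ has degree $d+1 \geq 2$ for $d \geq 1$; and the non-constant part of each $P_i$ or $Q_j$ contributes terms of degree $mk+1$ for some $k \geq 1$, hence at least $m+1 \geq 3$. Since $D|_C$ is itself homogeneous of degree one, these higher-degree contributions must vanish, leaving
\[
D|_C \;=\; \sum_{i=1}^m a_i\, D_{i*}^C + \sum_{j=n-m+1}^n b_j\, D_{*j}^C.
\]

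Finally, I would set $D' := D - \sum_{i=1}^m a_i\, D_{i*} - \sum_{j=n-m+1}^n b_j\, D_{*j}$, where the row and column derivations on the right are now those of the ambient algebra $\oqmmn$. By construction $D'$ is homogeneous of degree one and $D'|_C = 0$. Evaluating $D|_C$ on $[I\mid J]$ yields $\bigl(\sum_i a_i + \sum_j b_j\bigr)[I\mid J]$, which vanishes by the hypothesis $D([I\mid J])=0$, so $\sum_i a_i + \sum_j b_j = 0$ and hence $D'([I\mid J])=0$ as well. Lemma \ref{lemma-db-in-b} then applies to $D'$ to give $D'(B) \subseteq B$. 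I expect the main obstacle to be the homogeneity extraction in the middle step; the construction of $D'$ and the verification of its properties are routine once that reduction is in hand.
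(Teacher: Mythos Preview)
Your proof is correct and follows essentially the same route as the paper: restrict to $C$ via Lemma~\ref{lemma-dc-in-c}, invoke \cite[Theorem 2.9]{ll-qmatrix-ders}, use homogeneity to strip the inner part and the non-constant coefficients, and then apply Lemma~\ref{lemma-db-in-b} to the adjusted derivation. Your extra step showing $\sum_i a_i + \sum_j b_j = 0$ is harmless but unnecessary, since $D'([I\mid J]) = 0$ already follows from $D'|_C = 0$ and $[I\mid J]\in C$.
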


  \begin{proof} 
 Note that $D(C)\subseteq C$, by Lemma~\ref{lemma-dc-in-c}.
    Set $Y:=[I\mid J]$, the quantum determinant of $C$.
 By \cite[Theorem 2.9]{ll-qmatrix-ders}, there are polynomials $P_1,\dots,P_m,Q_{n-m+1},\dots, Q_n\in\k[Y]$ and an element 
 $z\in C$ such that 
\begin{eqnarray*}
D|_C=\ad_z +\sum_{i=1}^m P_iD_{i\ast}|_C +\sum_{j=n-m+1}^n Q_jD_{\ast j}|_C.
\end{eqnarray*}
 Let $s\geq n-m+1$ so that $x_{rs}\in C$.
Then 
\begin{eqnarray*}
D|_C(x_{rs}) &=& \ad_z(x_{rs}) +  \sum_{i=1}^m P_iD_{i\ast}|_C(x_{rs}) +\sum_{j=n-m+1}^n Q_jD_{\ast j}|_C(x_{rs})
\end{eqnarray*}
Let $a_i$ be the constant term in $P_i$ and $b_j$ be the constant term in $Q_j$ so that 
\begin{eqnarray*}
\lefteqn{D|_C(x_{rs}) -\sum_{i=1}^{m}  a_iD_{i\ast}|_C(x_{rs})+\sum_{j=n-m+1}^n b_jD_{\ast j}|_C(x_{rs})} \\
& = &zx_{rs}-x_{rs}z + \sum_{i=1}^{m} (P_i-a_i)D_{i\ast}|_C(x_{rs}) +\sum_{j=n-m+1}^n (Q_j-b_j)D_{\ast j}|_C(x_{rs}).
\end{eqnarray*}
 As $D(x_{rs})$ is homogeneous of degree one, the terms on the left hand side of the equation all have degree one, whereas the nonzero terms on the right hand side all have degree greater than one. It follows that both sides are zero and so 
\begin{eqnarray*}
D|_C(x_{rs}) &=& \sum_{i=1}^{m} a_iD_{i\ast}|_C(x_{rs})+\sum_{i=n-m+1}^n  b_jD_{\ast j}|_C(x_{rs})\\
\end{eqnarray*}
Set $$D':= D-\sum_{i=1}^{m} a_iD_{i\ast}+\sum_{i=n-m+1}^n  b_jD_{\ast j}.$$
 Then $D'|_C=0$. Note that $D'$ is  a homogeneous derivation as $D$, $D_{i\ast}$ and $D_{\ast j}$ are all homogeneous.  Lemma~\ref{lemma-db-in-b} shows that $D'(B)\subseteq B$.  

\end{proof}



\subsection{Derivatives column by column}

Let $\epsilon_1,\dots,\epsilon_m$ and $\epsilon'_1,\dots,\epsilon'_n$ be the standard bases for $\mz^m$ and $\mz^n$, respectively. The quantum matrix algebra $\oqmmn$ has a natural $\mz^m\times\mz^n$ bigrading defined by giving each $x_{rs}$ bidegree, also called {\em bicontent}, $(\epsilon_r,\epsilon'_s)$. Observe that any quantum minor $[U\mid V]$ is homogeneous of bidegree $(\chi_U,\chi'_V)$, where $\chi_S$ (respectively $\chi'_S$) stands for the characteristic function of a subset $S$ of $\{1,\dots, m\}$ (respectively, of $\{1,\dots, n\}$). 
 If the bidegree of a homogeneous element is $(\chi_r, \chi_c)$, we refer to $\chi_r$ as the {\em row content} of the element and $\chi_c$ as the {\em column content} of the element. 
  The main use of the notion of content will be that in any equation of the form $P=Q$, the terms of the same bicontent on each side of the equation must be equal. \\

We consider $\oqmmn$ with $m<n$ and assume that we have a homogeneous derivation $D$ that acts trivially on $C$ and that $D(B)\subseteq B$.\\

  \begin{lemma}\label{lemma-linear-combination-column-derivatives}
  Use Notation~\ref{appendix-notation}. Let $D$ be a derivation on $\oqmmn$ such that $D(x_{ik})$ is homogeneous of degree one for all $x_{ik}$ and that $D(C)=0$.Then $D$ is a linear combination of $D_{*k}$, for $k=1,\dots, n-m$. 
 \end{lemma}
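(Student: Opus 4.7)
The goal is to prove that $D(x_{rs}) = c_s\, x_{rs}$ for some scalars $c_1,\dots,c_{n-m}\in\k$; since $D|_C=0$, nothing needs to be checked for $s\geq n-m+1$. For $s\leq n-m$, Lemma~\ref{lemma-db-in-b} places $D(x_{rs})$ in $B$, and the homogeneity hypothesis on $D$ lets me write
\[
D(x_{rs}) = \sum_{b,\,t\leq n-m} \alpha^{rs}_{bt}\, x_{bt}.
\]
My plan is to read off successively stronger constraints on the scalars $\alpha^{rs}_{bt}$ by applying $D$ to the quantum matrix relations between the generator $x_{rs}$ of $B$ and the generators $x_{r'j}$ of $C$ (where $s\leq n-m<j$), exploiting in each case that $D(x_{r'j})=0$.

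These $B$--$C$ relations split into three cases. When $r'<r$ the relation is $x_{rs}x_{r'j}=x_{r'j}x_{rs}$ (upper-right/lower-left), which will yield no new information; when $r'=r$ it is $x_{rs}x_{rj}=q\,x_{rj}x_{rs}$; and when $r'>r$ it is $x_{rs}x_{r'j}-x_{r'j}x_{rs}=(q-q^{-1})x_{rj}x_{r's}$ (upper-left/lower-right). Applying $D$ to the same-row relation gives $D(x_{rs})\,x_{rj}=q\,x_{rj}\,D(x_{rs})$. The first step is to expand both sides in the PBW basis in lexicographic order, using the commutation rules to rewrite each $x_{bt}x_{rj}$ (which for $b<r$ produces a $(q-q^{-1})$ correction $x_{bj}x_{rt}$), and to observe that the distinct PBW monomials $x_{rj}x_{bt}$ (for $b\neq r$) and $x_{bj}x_{rt}$ (for $b<r$) are linearly independent. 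Matching coefficients then forces $\alpha^{rs}_{bt}=0$ whenever $b\neq r$, so $D(x_{rs}) = \sum_t \alpha^{rs}_{rt}\,x_{rt}$ is supported on row $r$.

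Next, applying $D$ to the $r'>r$ relation and using the same upper-left/lower-right rule to expand $x_{rt}x_{r'j}-x_{r'j}x_{rt}=(q-q^{-1})x_{rj}x_{r't}$, the identity collapses (after the supports of $D(x_{rs})$ and $D(x_{r's})$ have been restricted) to
\[
\sum_{t\leq n-m} \big(\alpha^{rs}_{rt}-\alpha^{r's}_{r't}\big)\, x_{rj}\,x_{r't}=0.
\]
Since the monomials $x_{rj}\,x_{r't}$ (with $r<r'$) are already lexicographically ordered PBW basis elements, one concludes $\alpha^{rs}_{rt}=\alpha^{r's}_{r't}$, i.e., $\alpha^{rs}_{rt}$ depends only on $s$ and $t$; denote this common value by $\beta^s_t$.

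The final step is to isolate the column index by applying $D$ to the single-row quantum plane relation $x_{rs}x_{rs'}=q\,x_{rs'}x_{rs}$ for $s<s'\leq n-m$; substituting $D(x_{rs})=\sum_t \beta^s_t\,x_{rt}$ and $D(x_{rs'})=\sum_t \beta^{s'}_t\,x_{rt}$ and simplifying inside the quantum plane generated by row~$r$, one gets a linear combination of the distinct basis monomials $x_{rs}^2$, $x_{rs'}^2$, $x_{rs'}x_{rt}$ (for $t>s'$), and $x_{rt}x_{rs}$ (for $t<s$); linear independence forces $\beta^s_t=0$ whenever $t\neq s$, so $D(x_{rs})=\beta^s_s\,x_{rs}$. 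Setting $c_s:=\beta^s_s$ then gives $D = \sum_{s=1}^{n-m} c_s\, D_{*s}$. The main obstacle I expect is the PBW bookkeeping in the first step, because the $(q-q^{-1})$ cross-term $x_{bj}x_{rt}$ (for $b<r$) sits in the \emph{same} bicontent as the diagonal term $x_{rj}x_{bt}$; the argument hinges on verifying that these two monomials remain linearly independent after being rewritten in lexicographic PBW form, so that their coefficients may be separated.
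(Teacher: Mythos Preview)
Your proposal is correct and follows essentially the same three-step strategy as the paper's proof: first restrict $D(x_{rs})$ to row $r$ using a relation with a generator in $C$, then show the resulting coefficients are row-independent via the $(q-q^{-1})$ relation, then restrict to column $s$ using the single-row relations. The minor differences are that the paper handles the first step via bicontent with the cross-row relations $x_{ik}x_{jl}$ (splitting into $j<i$ and $j>i$) rather than your same-row relation, and uses adjacent columns $k,k\pm 1$ in the last step rather than general pairs $s<s'$; your PBW concern about the cross-term $x_{bj}x_{rt}$ resolves exactly as you anticipate, since $x_{bt}x_{rj}$ and $x_{bj}x_{rt}$ are distinct ordered monomials.
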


\begin{proof}
Let $x_{ik}\in B$, so that $1\leq i\leq m$ and $1\leq k \leq n-m$. 
As $D(B)\subseteq B$, by Lemma~\ref{lemma-db-in-b},  we can write 
$$D(x_{ik}) = \sum_{1\leq r\leq m\atop 1\leq s \leq n-m}a^{(ik)}_{rs}x_{rs}.$$
 for some $a^{(ik)}_{rs}\in\k$. Our first aim is to show that $a^{(ik)}_{rs}=0$ whenever $r\neq i$ so that 
 $$D(x_{ik}) =\sum_{1\leq s  \leq n-m} a^{(ik)}_{is}x_{is}.$$

Set $l:=n-m+1$. Choose any $j<i$. Note that $x_{ik}x_{jl} =x_{jl}x_{ik}$. 
As $x_{jl}\in C$ we know that $D(x_{jl})=0$ so that when we apply the derivative $D$ to this equation, we obtain
$$ \sum_{1\leq r\leq m\atop 1\leq s<l}a^{(ik)}_{rs}x_{rs}x_{jl} = \sum_{1\leq r\leq m\atop 1\leq s<l}a^{(ik)}_{rs}x_{jl}x_{rs}$$ 
or,
$$\sum_{1\leq r\leq m\atop 1\leq s<l}a^{(ik)}_{rs}\left(x_{rs}x_{jl}-x_{jl}x_{rs}\right)=0.$$
Consider terms in this equation with bicontent $(2\epsilon_j; \epsilon_s+\epsilon_l)$ (which from now on we also denote by $(j,j;s,l)$ to ease notation) to see that 
$$\sum_{1\leq s<l}a^{(ik)}_{js}\left(x_{js}x_{jl}-x_{jl}x_{js}\right)=0.$$ 
As $s<l$ this gives $$\sum_{1\leq s<l}a^{(ik)}_{js}(q-1)x_{jl}x_{js}=0,$$ 
From which it follows that $a^{(ik)}_{js}=0$ for all $s$ when $j<i$. \\

Next, choose any $j>i$. 
As $D(x_{jk})\in B$, write 
$$D(x_{jk})=\sum_{1\leq r\leq m\atop 1\leq s<l}a^{(jk)}_{rs}x_{rs}.$$
Apply $D$ to the equation $x_{ik}x_{jl} - x_{jl}x_{ik}=\qhat x_{il}x_{jk}$ (where $\qhat := q-q^{-1}$), noting  that $D(x_{*l})=0$ as $x_{*l}\in C$,
to obtain
\begin{equation}\label{equation-nasty-case}
\sum_{1\leq r\leq m\atop 1\leq s<l} a^{(ik)}_{rs}x_{rs}x_{jl}-\sum_{1\leq r\leq m\atop 1\leq s<l} a^{(ik)}_{rs}x_{jl}x_{rs} = \qhat\left(\sum_{1\leq r\leq m\atop 1\leq s<l} a^{(jk)}_{rs}x_{il}x_{rs}\right)
\end{equation}
Look at terms with bicontent $(j,j;s,l)$ in Equation~(\ref{equation-nasty-case}). There are no such terms on the right hand side, as $i$ occurs as a row index and $i\neq j$, while on the left hand side we get such terms when $r=j$. Hence, $a^{(ik)}_{js}x_{js}x_{jl} -a^{(ik)}_{js}x_{jl}x_{js} = 0$, for each $s$. As $s<l$ this gives 
$(q-1)a^{(ik)}_{js}x_{js}x_{jl} =0$ and so $a^{(ik)}_{js}=0$, for all $s$  and each $j>i$.  As we already know that $a^{(ik)}_{js}=0$ for all $s$ when  $j<i$ this gives 
$$D(x_{ik})=\sum_{1\leq s<l} a^{(ik)}_{is}x_{is},$$
for $1\leq i\leq m$ as required.\\

Fix $k$, with $1\leq k<l$, and $j>1$. Apply the derivative $D$ to the equation $x_{1k}x_{jl}-x_{jl}x_{1k}=\qhat x_{1l}x_{jk}$, using the expressions we have just derived, to obtain
\begin{equation}\label{equation-nasty-case-again}
\sum_{1\leq s<l} a^{(1k)}_{1s}x_{1s}x_{jl} - \sum_{1\leq s<l} a^{(1k)}_{1s}x_{jl}x_{1s}
=
\qhat \sum_{1\leq s<l} a^{(jk)}_{js}x_{1l}x_{js}
\end{equation}
Look at terms with content $(1,j;s,l)$ in Equation~(\ref{equation-nasty-case-again}) for fixed $s$. 
On the left hand side we have $a^{(1k)}_{1s}x_{1s}x_{jl} - a^{(1k)}_{1s}x_{jl}x_{1s}$ and this is equal to $\qhat a^{(1k)}_{1s}x_{1l}x_{js}$. 
On the right hand side we have $\qhat  a^{(jk)}_{js}x_{1l}x_{js}$. It follows that $a^{(jk)}_{js}=a^{(1k)}_{1s}$ for each $s$ with $1\leq s<l$, so that 
$$D(x_{jk})=\sum_{1\leq s<l} a^{(1k)}_{1s}x_{js},$$ 
for each $j$ with $1\leq j\leq m$.

Hence, 
\begin{align}\label{derivative-of-column}
D\begin{pmatrix} x_{1k}\\x_{2k}\\\vdots\\x_{mk}\end{pmatrix}
=a^{(1k)}_{11}\begin{pmatrix} x_{11}\\x_{21}\\\vdots\\x_{m1}\end{pmatrix}
+a^{(1k)}_{12}\begin{pmatrix} x_{12}\\x_{22}\\\vdots\\x_{m2}\end{pmatrix}
+\dots
+a^{(1k)}_{1,l-1}\begin{pmatrix} x_{1,l-1}\\x_{2,l-1}\\\vdots\\x_{m,l-1}\end{pmatrix}.
\end{align}

For each $k=1,\dots, l-1$, we want to show that $a^{(1k)}_{1r}=0$ when $r\neq k$, so that 
\begin{align*}
D\begin{pmatrix} x_{1k}\\x_{2k}\\\vdots\\x_{mk}\end{pmatrix}
=a^{(1k)}_{1k}\begin{pmatrix} x_{1k}\\x_{2k}\\\vdots\\x_{mk}\end{pmatrix}
=a^{(1k)}_{1k}D_{*k}\begin{pmatrix} x_{1k}\\x_{2k}\\\vdots\\x_{mk}\end{pmatrix}.
\end{align*}
~\\

Fix $k$ with $1\leq k\leq l-1$. 
First, we show that $a^{(1k)}_{1r}=0$ for each $r$ such that $k<r\leq l-1$. If $k=l-1$, there is no such $r$ to consider, so assume that $k<l-1$, in which case $k+1<l$ and $x_{1,k+1}\in B$. 
Apply $D$ to the equation 
$x_{1k}x_{1,k+1} =qx_{1,k+1}x_{1k}$ for $k+1<l$ to obtain 
\[
D(x_{1k})x_{1,k+1} +x_{1k}D(x_{1,k+1})= qD(x_{1,k+1})x_{1k}+qx_{1,k+1}D(x_{1k}),
\]
which gives
\begin{eqnarray}\label{adjacent-columns}
\sum_{1\leq s<l} a^{(1k)}_{1s}x_{1s}x_{1,k+1}+\sum_{1\leq s<l} a^{(1,k+1)}_{1s}x_{1k}x_{1s}
=\sum_{1\leq s<l} qa^{(1,k+1)}_{1s}x_{1s}x_{1k}+\sum_{1\leq s<l} qa^{(1k)}_{1s}x_{1,k+1}x_{1s}.
\end{eqnarray}
Consider terms in this equation with content $(1,1;k+1,r)$ for $r>k$. These occur in the first and fourth sums when $s=r$ and do not occur in the second and third sums because $k$ is in the column content of the terms in these sums. Hence,  
$a^{(1k)}_{1r}x_{1r}x_{1,k+1}=qa^{(1k)}_{1r}x_{1,k+1}x_{1r}$. If $r=k+1$, then we get $a^{(1k)}_{1,k+1}x_{1,k+1}x_{1,k+1}=qa^{(1k)}_{1,k+1}x_{1,k+1}x_{1,k+1}$ and so $a^{(1k)}_{1,k+1}=0$. If $r>k+1$ then $x_{1r}x_{1,k+1} =q^{-1}x_{1,k+1} x_{1r}$ so we see that 
$q^{-1}a^{(1k)}_{1r}x_{1,k+1}x_{1r}=qa^{(1k)}_{1r}x_{1,k+1}x_{1r}$ and so $a^{(1k)}_{1r}=0$. Hence, 
$a^{(1,k)}_{1,r}=0$ for all $r>k$.\\

Next, we show $a^{(1k)}_{1r}=0$ when $r<k$. If $k=1$ then there is no such $r$ to consider, so assume that $k>1$, in which case $k-1\geq 1$ so that $x_{1,k-1}$ exists and is in $B$. 
Apply $D$ to the the equation 
$x_{1,k-1}x_{1,k} =qx_{1,k}x_{1,k-1}$
\[
D(x_{1,k-1})x_{1,k} +x_{1,k-1}D(x_{1,k})= qD(x_{1,k})x_{1,k-1}+qx_{1,k}D(x_{1,k-1}),
\]
which gives
\begin{eqnarray}\label{adjacent-columns}
\sum_{1\leq s<l} a^{(1,k-1)}_{1s}x_{1s}x_{1,k}+\sum_{1\leq s<l} a^{(1,k)}_{1s}x_{1,k-1}x_{1s}
=\sum_{1\leq s<l} qa^{(1k)}_{1s}x_{1s}x_{1,k-1}+\sum_{1\leq s<l} qa^{(1,k-1)}_{1s}x_{1k}x_{1s}.
\end{eqnarray}
Consider terms in this equation with content $(1,1;r,k-1)$ with $r<k$. There are no such terms in the first and fourth terms, as $k$ is in the column content of the terms in these sums, and such terms occur in the second and third terms when $s=r$. Hence, 
 $a^{(1,k)}_{1r}x_{1,k-1}x_{1r}=qa^{(1,k)}_{1r}x_{1r}x_{1,k-1}$. When $r=k-1$ we see that $a^{(1,k)}_{1,k-1}x_{1,k-1}x_{1,k-1}=qa^{(1,k)}_{1,k-1}x_{1,k-1}x_{1,k-1}$ so that $a^{(1,k)}_{1,k-1}=0$. When $r<k-1$ then $x_{1,k-1}x_{1r}=q^{-1}x_{1r}x_{1,k-1}$ so we see that 
 $q^{-1}a^{(1,k)}_{1,r}x_{1r}x_{1,k-1}=qa^{(1,k)}_{1,r}x_{1,r}x_{1,k-1}$ and so $a^{(1,k)}_{1,r}=0$. Hence, $a^{(1,k)}_{1,r}=0$ for all $r<k$. Thus, $a^{(1k)}_{1r}=0$ whenever $r\neq k$ so that $D(x_{ik})=a^{(1k)}_{ik}x_{ik}$ for each $1\leq i\leq m$, as required to show that \[
D\begin{pmatrix} x_{1k}\\x_{2k}\\\vdots\\x_{mk}\end{pmatrix}
=a^{(1k)}_{1k}\begin{pmatrix} x_{1k}\\x_{2k}\\\vdots\\x_{mk}\end{pmatrix}
\]
for each $1\leq k<l$. As we already know that $D$ acts trivially on columns $n+1-m$ up to $n$, this gives 
$$D=  a^{(1,1)}_{11}D_{*1} +a^{(1,2)}_{12} D_{*2} +\dots+a^{(1,l-1)}_{1,l-1} D_{*,l-1},$$
as required.
\end{proof}


\subsection{Conclusion of appendix} 

The preceding analysis proves the following Proposition.

\begin{proposition}\label{proposition-non-square} 
 
 Suppose that $m<n$ and that we have a derivation $D$ of $\oqmmn$ such that

(i) the derivation $D$ is homogeneous; that is, all terms appearing non-trivially in $D(x_{ij})$ have degree one. 

(ii) $D([1,\dots,m\mid n+1-m,\dots,n])=0$.\\

Then 
\[
D=
\sum_{i=1}^{m} a_{i}D_{i*} + \sum_{k=1}^{n}\, b_{k}D_{*k}
\]
for some $a_i,b_k\in K$.
\end{proposition}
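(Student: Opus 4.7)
The plan is to split $\oqmmn$ into the left strip $B$ (the quantum matrix subalgebra on the first $n-m$ columns) and the right square block $C$ (the quantum matrix subalgebra on the last $m$ columns), and to treat $D$ on each piece separately. The quantum determinant $[I\mid J]=[1,\dots,m\mid n-m+1,\dots,n]$ is central in $C$ and $q$-commutes with every $x_{ij}$ of $B$, while commuting outright with every $x_{ij}$ of $C$. Applying $D$ to these commutation relations and using $D([I\mid J])=0$ together with the homogeneity of $D$, a short content-separation argument gives $D(B)\subseteq B$ and $D(C)\subseteq C$.

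Next I would invoke the known result for square quantum matrices, \cite[Theorem 2.9]{ll-qmatrix-ders}, applied to $D|_C$. It writes $D|_C$ as the sum of an inner derivation $\ad_z$ and an expression $\sum_{i=1}^m P_i D_{i*}|_C + \sum_{j=n-m+1}^n Q_j D_{*j}|_C$ with $P_i, Q_j\in\k[[I\mid J]]$. Since $D$ is homogeneous of degree one, the terms of degree greater than one produced by $\ad_z$ and by the non-constant parts of $P_i, Q_j$ must all cancel, so only the constant terms survive. Subtracting off the corresponding global row and column derivations of $\oqmmn$ yields a new homogeneous derivation $D'$, still preserving $B$, which now vanishes identically on $C$.

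The hard part is to show that $D'|_B$ is a linear combination of the remaining column derivations $D_{*k}$ for $k=1,\dots,n-m$. Here I would exploit the $\mz^m\times\mz^n$ bigrading by row and column content and expand $D'(x_{ik})=\sum a^{(ik)}_{rs}x_{rs}$ with $1\leq r\leq m$ and $1\leq s\leq n-m$. The argument proceeds in three content-matching steps: (i) applying $D'$ to $x_{ik}x_{jl}=x_{jl}x_{ik}$ for $j<i$ and to $x_{ik}x_{jl}-x_{jl}x_{ik}=\qhat x_{il}x_{jk}$ for $j>i$, where $l=n-m+1$ and $x_{jl}\in C$ is annihilated by $D'$, and comparing terms of bicontent $(j,j;s,l)$ to force $a^{(ik)}_{rs}=0$ for all $r\neq i$; (ii) applying $D'$ to the same relation with $i=1$ and comparing terms of bicontent $(1,j;s,l)$ to obtain $a^{(jk)}_{js}=a^{(1k)}_{1s}$, so the column's behaviour under $D'$ is determined by a single row; (iii) applying $D'$ to the adjacent-column relations $x_{1,k-1}x_{1k}=qx_{1k}x_{1,k-1}$ and $x_{1k}x_{1,k+1}=qx_{1,k+1}x_{1k}$ within $B$ and matching bicontents $(1,1;r,k\pm1)$ to force $a^{(1k)}_{1s}=0$ whenever $s\neq k$. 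The surviving diagonal coefficients $a^{(1k)}_{1k}$ then express $D'|_B$ as $\sum_k a^{(1k)}_{1k}D_{*k}$.

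The main obstacle is step (iii): unlike the situation in the square case, the subalgebra $B$ has trivial centre and carries no analogue of the quantum determinant that could streamline the argument, so the only available tool is systematic bicontent bookkeeping through the $2\times 2$ $q$-commutation and Manin relations. One must separately handle the boundary cases $k=1$ and $k=n-m$ (where no left or right neighbouring column exists), and must keep track of the two sub-cases $r=k\pm 1$ and $r\neq k\pm 1$ which produce different powers of $q$ and hence different vanishing mechanisms. Once this is done, combining the adjustment of $D'$ on $C$ with the description of $D'$ on $B$ yields the required expression $D=\sum_{i=1}^m a_i D_{i*}+\sum_{k=1}^n b_k D_{*k}$.
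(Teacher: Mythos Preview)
Your proposal is correct and follows essentially the same route as the paper: the same $B/C$ decomposition, the same use of $[I\mid J]$-commutation plus homogeneity to obtain $D(B)\subseteq B$ and $D(C)\subseteq C$, the same reduction on $C$ via \cite[Theorem~2.9]{ll-qmatrix-ders} followed by a degree argument to strip down to constant coefficients, and the identical three-step bicontent analysis (rows $j<i$, rows $j>i$, then adjacent columns) for the residual derivation on $B$. Even the specific relations and bicontents you single out match those used in the paper's Lemmas~\ref{lemma-db-in-b}--\ref{lemma-linear-combination-column-derivatives}.
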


\begin{proof}
By Lemma~\ref{lemma-linear-combination}, there is a homogeneous derivation $D^{(1)}$ such that $D^{(1)}-D$ is a linear combination of the $D_{i*}$, with $1\leq i\leq m$, and $D_{*j}$, with $n-m+1\leq j\leq n$,  and such that $D^{(1)}|_C=0$. By Lemma~\ref{lemma-linear-combination-column-derivatives}, 
 $D^{(1)}$ is a linear combination of the $D_{*k}$ with $1\leq k\leq n-m$. Writing $D=D^{(1)}-\left(D^{(1)}-D\right)$ gives the required result. 
\end{proof}

\end{section} 






\ \\

\begin{minipage}{\textwidth}
\noindent S Launois\\
School of Mathematics, Statistics and Actuarial Science,\\
University of Kent\\
Canterbury, Kent, CT2 7FS,\\ UK\\[0.5ex]
email: S.Launois@kent.ac.uk \\

\noindent T H Lenagan\\
School of Mathematics and Maxwell Institute for Mathematical Sciences,\\
The University of Edinburgh,\\
Edinburgh, EH9 3FD,\\
UK\\[0.5ex]
email: t.lenagan@ed.ac.uk\\
\end{minipage}


\end{document}